\documentclass[smallextended]{svjour3_sta}       
\usepackage{amsmath,amsthm1,amscd,amssymb}
\smartqed  
\usepackage{psfrag,contour}
\usepackage[mathscr]{eucal}
\usepackage{graphicx,url,color}
\newcommand\pfad{} 
\newtheorem{thm}{Theorem}
\newtheorem{cor}[thm]{Corollary}
\newtheorem{lem}[thm]{Lemma}
\theoremstyle{definition}

\theoremstyle{remark}

\newcommand\Lemref[1]{Lemma~\ref{#1}}
\newcommand\Thmref[1]{Theorem~\ref{#1}}
\newcommand\Corref[1]{Corollary~\ref{#1}}

\newcommand\Figref[1]{Figure~\ref{#1}}
\numberwithin{equation}{section}
\newcommand\Vkt[1]{{\mathbf #1}}
\newcommand\dotVkt[1]{{\dot{\mathbf #1}}}
\newcommand\ol[1]{\overline{#1}}

\newcommand\wt[1]{\tilde{#1}}  
             
\newcommand\sz{\small} %

\newlength{\lsqu}
\setlength\lsqu{1.8mm}

\newcommand\Frac[2]{{\displaystyle\frac{#1}{#2}}}
\newcommand\smFrac[2]{\mbox{\small $\displaystyle\frac{#1}{#2}$}}
\newcommand\ssmFrac[2]{\mbox{\footnotesize $\displaystyle \frac{#1}{#2}$}}

\newcommand\RR{{\mathbb R}}

\newcommand\zwi{\mskip 9mu}
\newcommand\SN{\mskip 2mu\mathrm{sn}\mskip 2mu}
\newcommand\Sn{\mskip 2mu\mathrm{sn}}
\newcommand\CN{\mskip 2mu\mathrm{cn}\mskip 2mu}
\newcommand\Cn{\mskip 2mu\mathrm{cn}}
\newcommand\DN{\mskip 2mu\mathrm{dn}\mskip 2mu}
\newcommand\Dn{\mskip 2mu\mathrm{dn}}
\newcommand\const{\mathrm{const.}}
\definecolor{blau}{cmyk}{1.00,0.30,0.00,0.20} 
\newcommand\blue{\color{blau}}
\definecolor{rotcmyk}{cmyk}{0.00,1.00,1.00,0.00}
\newcommand\red{\color{rotcmyk}}
\definecolor{hgrau}{cmyk}{0.00,0.00,0.00,0.20}

\definecolor{dgrau}{cmyk}{0.00,0.00,0.00,0.85}
\newcommand\dgrau{\color{dgrau}}
\definecolor{gruen}{cmyk}{1.00,0.00,0.90,0.00} 
\def\green{\color{gruen}}
\definecolor{gelb}{cmyk}{0.00,0.00,1.00,0.00} 

\definecolor{gelba}{cmyk}{0.00,0.00,0.50,0.00} 

\definecolor{gelbb}{cmyk}{0.00,0.00,0.20,0.00} 

\sloppy

\journalname{European Journal of Mathematics}
\begin{document}

\title{On the Motion of Billiards in Ellipses}


\author{Hellmuth Stachel}


\institute{Vienna Institute of Technology\\
              \email{stachel@dmg.tuwien.ac.at}\\
              \emph{Vienna University of Technology}
}


\maketitle

\begin{abstract}
For billiards in an ellipse with an ellipse as caustic, there exist canonical coordinates such that the billiard transformation from vertex to vertex is equivalent to a shift of coordinates.
A kinematic analysis of billiard motions paves the way to an explicit canonical parametrization of the billiard and even of the associated Poncelet grid.
This parametrization uses Jacobian elliptic functions to the numerical eccentricity of the caustic as modulus.
\keywords{billiard \and billiard motion \and confocal conics, elliptic functions}
\subclass{51N20 \and 53A17 \and 33E05, 22E30}
\end{abstract}


\section{Introduction}\label{sec:intro}
Already for two centuries, billiards in ellipses have attracted the attention of mathematicians, beginning with J.-V.\ Poncelet and A.\ Cayley.
The assertion that one periodic billiard inscribed in an ellipse $e$ and tangent to a confocal ellipse $c$ implies a one-parameter family of such polygons, is known as the standard example of a Poncelet porism.
It was Cayley who derived analytical conditions for such a pair $(e,c)$ of ellipses.

In 2005 S.\ Tabachnikov published a book on billiards from the viewpoint of integrable systems \cite{Tabach}.
The book \cite{DR_Buch} and various papers by V.\ Dragovi\'c and M.\ Radnovi\'c addressed billiards in conics and quadrics within the framework of dynamical systems.

Computer animations carried out by D.\ Reznik, stimulated a new vivid interest on this well studied topic, where algebraic and analytic methods are meeting.  
Originally, Reznik's experiments focused on {\em billiard motions} in ellipses, i.e., on the variation of billiards with a fixed circumscribed ellipse $e$ and a fixed caustic $c\,$.
He published a list of more than 80 numerically detected invariants in \cite{80} and contributed, together with his coauthors R.\ Garcia, J.\ Koiller and M.\ Helman, several proofs.
Other authors like A.\ Akopyan, M.\ Bialy, A.\ Chavez-Caliz, R.\ Schwartz, and S.\ Tabachnikov published several proofs and found more invariants (e.g., in \cite{Ako-Tab,Bialy-Tab,Chavez}).

For a long time, at least since Jacobi's proof of the Poncelet theorem on periodic billiards (see further references in \cite[p.~320]{DR_russ}), it has been wellknown that there is a tight connection between billiards and elliptic functions (note also \cite[Sect.~11.2]{Duistermaat} and \cite{Bobenko}).
On the other hand, S.\ Tabachnikov proved in his book \cite{Tabach} the existence of {\em canonical} parameters on ellipses with the property that the billiard transformation between consecutive vertices $P_i\mapsto P_{i+1}$ of a billiard acts like a shift on the parameters.

\smallskip
The goal of this paper is to prove that Jacobian elliptic functions with the numerical eccentricity of the caustic $c$ as modulus pave the way to canonical coordinates on the ellipse $e\,$. 
This is a consequence of a kinematic analysis of the billiard motion.
It yields an infinitesimal transformation in the plane which preserves a family of confocal ellipses while it permutes the confocal hyperbolas as well as the tangents of the caustic.
Integration results in a group of transformations with a canonical parameter.
In terms of elliptic functions, we also obtain a mapping that sends a square grid together with the diagonals to a Poncelet grid.\footnote{
Recently, parametrizations of confocal conics in termins of elliptic functions were also presented in \cite{Bobenko}, but not from the viewpoint of billiards.}
The paper concludes with applying the results of the velocity analysis to a few  invariants of periodic billiards.
These invariants deal mainly with the distances which occur on each side between the contact point with the caustic and the endpoints. 

\section{Confocal conics and billiards} 

At the begin, we recall a few properties of confocal conics.
A family of {\em confocal} central conics is given by
\begin{equation}\label{eq:confocal}
  \frac{x^2}{a^2+k} + \frac{y^2}{b^2+k} = 1, \ \mbox{where} \
    k \in \RR \setminus \{-a^2, -b^2\}
\end{equation}
serves as a parameter in the family.
All these conics share the focal points $F_{1,2} = (\pm d,0)$, where $d^2:= a^2-b^2$.

The confocal family sends through each point $P$ outside the common axes of symmetry two orthogonally intersecting conics, one ellipse and one hyperbola \cite[p.~38]{Conics}.
The parameters $(k_e, k_h)$ of these two conics define the {\em elliptic coordinates} of $P$ with
\[  -a^2 < k_h < -b^2 < k_e\,.
\]
If $(x,y)$ are the cartesian coordinates of $P$, then $(k_e,k_h)$ are the roots of the quadratic equation
\begin{equation}\label{eq:cart_in_ell}
  k^2 + (a^2 + b^2 - x^2 - y^2)k + (a^2 b^2 - b^2 x^2 - a^2 y^2) = 0,
\end{equation}
while conversely
\begin{equation}\label{eq:ell_in_cart}
   x^2 = \frac{(a^2 + k_e)(a^2 + k_h)}{d^2}\,, \quad
    y^2 = -\frac{(b^2 + k_e)(b^2 + k_h)}{d^2}\,.
\end{equation}

Suppose that $(a,b)$ in \eqref{eq:confocal} are the semiaxes $(a_c,b_c)$ of the ellipse $c$ with $k = 0$.
Then, for points $P$ on a confocal ellipse $e$ with semiaxes $(a_e,b_e)$ and $k = k_e > 0$, i.e., exterior to $c$, the standard parametrization yields 
\begin{equation}\label{eq:P_coord}
 \begin{array}{c}
   P = (x,y) = (a_e\cos t,\,b_e\sin t), \ 0 \le t < 2\pi,
   \\[1.0mm]
   \mbox{with} \ a_e^2 = a_c^2 + k_e, \ b_e^2 = b_c^2 + k_e\,. 
 \end{array}
\end{equation}
For the elliptic coordinates $(k_e,k_h)$ of $P$ follows from \eqref{eq:cart_in_ell} that 
\[ k_e + k_h = a_e^2\cos^2 t + b_e^2\sin^2 t - a_c^2 - b_c^2.
\]
After introducing the respective tangent vectors of $e$ and $c$, namely
\def\arraycolsep{0.6mm}
\begin{equation}\label{eq:te_und_tc}
 \begin{array}{rcl}
  \Vkt t_e(t) &:= &(-a_e\sin t,\, b_e\cos t), 
  \\[0.8mm]   
  \Vkt t_c(t) &:= &(-a_c\sin t,\, b_c\cos t),
 \end{array}  \ \mbox{where} \
  \Vert \Vkt t_e\Vert^2 = \Vert \Vkt t_c\Vert^2 + k_e\,,   
\end{equation}
we obtain
\begin{equation}\label{eq:k_h}
   k_h = k_h(t) = -(a_c^2\sin^2 t + b_c^2\cos^2 t) = -\Vert\Vkt t_c(t)\Vert^2   
   = -\Vert\Vkt t_e(t)\Vert^2 + k_e
\end{equation}
and $\Vert\Vkt t_e(t)\Vert^2 = k_e - k_h(t)\,$.
Note that points on the confocal ellipses $e$ and $c$ with the same parameter $t$ have the same coordinate $k_h$.
Consequently, they belong to the same confocal hyperbola (\Figref{fig:Poncelet_grid2}).
Conversely, points of $e$ or $c$ on this hyperbola have a parameter out of $\{t, -t, \pi+t, \pi-t\}$.

Let $\theta_i/2$ denote the angle between the tangents drawn from any point $P_i\in e$ to $c$ and the tangent to $e$ at $P_i$ (Figures~\ref{fig:Graves} or \ref{fig:vel_gesamt}).
Then we obtain for $P_i = (a_e\cos t_i,\,b_e\sin t_i)$ with elliptic coordinates $\left( k_e, k_h(t_i)\right)$ 
\begin{equation}\label{eq:Winkel/2} 
 \begin{array}{c}
  \sin^2 \Frac{\theta_i}2 = \Frac{k_e}{\Vert\Vkt t_e(t_i)\Vert^2} 
  = \Frac{k_e}{k_e - k_h(t_i)}, \quad
    \tan\Frac{\theta_i}2 = \pm\sqrt{-\Frac{k_e}{k_h(t_i)}}
   \\[3.0mm]
   \mbox{and}\zwi
   \sin\theta_i = \pm\Frac{2\sqrt{-k_e k_h(t_i)}}{k_e - k_h(t_i)}
   = \pm\Frac{2 \Vert\Vkt t_c(t_i)\Vert \sqrt{k_e}(\Vert\Vkt t_e(t_i)\Vert^2}\,. 
 \end{array}
\end{equation} 
For a proof see \cite{Sta_I}.
We can assume a counter-clockwise order of the billiard.
Hence, all exterior angles are positive.

\begin{figure}[htb] 
  \centering
  \def\sz{\small}
  \psfrag{P1}[lb]{\contourlength{1.0pt}\contour{white}{\sz\red $P_1$}}
  \psfrag{P2}[lb]{\contourlength{1.0pt}\contour{white}{\sz\red $P_2$}}
  \psfrag{P3}[rb]{\contourlength{1.0pt}\contour{white}{\sz\red $P_3$}}
  \psfrag{P4}[rb]{\contourlength{1.0pt}\contour{white}{\sz\red $P_4$}}
  \psfrag{P5}[rt]{\contourlength{1.0pt}\contour{white}{\sz\red $P_5$}}
  \psfrag{P6}[rt]{\contourlength{1.0pt}\contour{white}{\sz\red $P_6$}}
  \psfrag{P7}[ct]{\contourlength{1.0pt}\contour{white}{\sz\red $P_7$}}
  \psfrag{P8}[lt]{\contourlength{1.0pt}\contour{white}{\sz\red $P_8$}}
  \psfrag{P9}[lt]{\contourlength{1.0pt}\contour{white}{\sz\red $P_9$}}
  \psfrag{Q1}[rt]{\contourlength{1.0pt}\contour{gelbb}{\sz\blue $Q_1$}}
  \psfrag{Q2}[rt]{\contourlength{1.0pt}\contour{gelbb}{\sz\blue $Q_2$}}
  \psfrag{Q3}[lt]{\contourlength{1.0pt}\contour{gelbb}{\sz\blue $Q_3$}}
  \psfrag{Q4}[lc]{\contourlength{1.0pt}\contour{gelbb}{\sz\blue $Q_4$}}
  \psfrag{Q5}[lb]{\contourlength{1.2pt}\contour{gelbb}{\sz\blue $Q_5$}}
  \psfrag{Q6}[lb]{\contourlength{1.0pt}\contour{gelbb}{\sz\blue $Q_6$}}
  \psfrag{Q7}[rb]{\contourlength{1.0pt}\contour{gelbb}{\sz\blue $Q_7$}}
  \psfrag{Q8}[rb]{\contourlength{1.0pt}\contour{gelbb}{\sz\blue $Q_8$}}
  \psfrag{Q9}[rc]{\contourlength{1.0pt}\contour{gelbb}{\sz\blue $Q_9$}}
  \psfrag{S1}[lb]{\contournumber{32}\contourlength{1.2pt}\contour{white}{\sz\dgrau $S_1^{(1)}$}}
  \psfrag{S2}[lb]{\contourlength{1.2pt}\contour{white}{\sz\dgrau $S_2^{(1)}$}}
  \psfrag{S3}[lb]{\contournumber{32}\contourlength{1.2pt}\contour{white}{\sz\dgrau $S_3^{(1)}$}}
  \psfrag{S4}[rc]{\contournumber{32}\contourlength{1.2pt}\contour{white}{\sz\dgrau $S_4^{(1)}$}}
  \psfrag{S5}[rt]{\contourlength{1.2pt}\contour{white}{\sz\dgrau $S_5^{(1)}$}}
  \psfrag{S6}[rt]{\contournumber{32}\contourlength{1.2pt}\contour{white}{\sz\dgrau $S_6^{(1)}$}}
  \psfrag{S7}[lt]{\contourlength{1.2pt}\contour{white}{\sz\dgrau $S_7^{(1)}$}}
  \psfrag{S8}[lt]{\contourlength{1.2pt}\contour{white}{\sz\dgrau $S_8^{(1)}$}}
  \psfrag{S9}[lc]{\contournumber{32}\contourlength{1.2pt}\contour{white}{\sz\dgrau $S_9^{(1)}$}}
  \psfrag{S12}[lt]{\contourlength{1.0pt}\contour{white}{\sz\green $S_1^{(2)}$}}
  \psfrag{S42}[lb]{\contourlength{1.0pt}\contour{white}{\sz\green $S_4^{(2)}$}}
  \psfrag{S52}[ct]{\contourlength{1.0pt}\contour{white}{\sz\green $S_5^{(2)}$}}
  \psfrag{S92}[ct]{\contourlength{1.0pt}\contour{white}{\sz\green $S_9^{(2)}$}}
  \psfrag{S82}[lc]{\contourlength{1.0pt}\contour{white}{\sz\green $S_8^{(2)}$}}
  \psfrag{c}[rt]{\blue $\boldsymbol{c}$}
  \psfrag{e}[lb]{\red $\boldsymbol{e}$}  \psfrag{e1}[lb]{\green $\boldsymbol{e}^{(1)}$}
  \psfrag{e2}[rt]{\green $\boldsymbol{e}^{(2)}$} 
  \includegraphics[width=110mm]{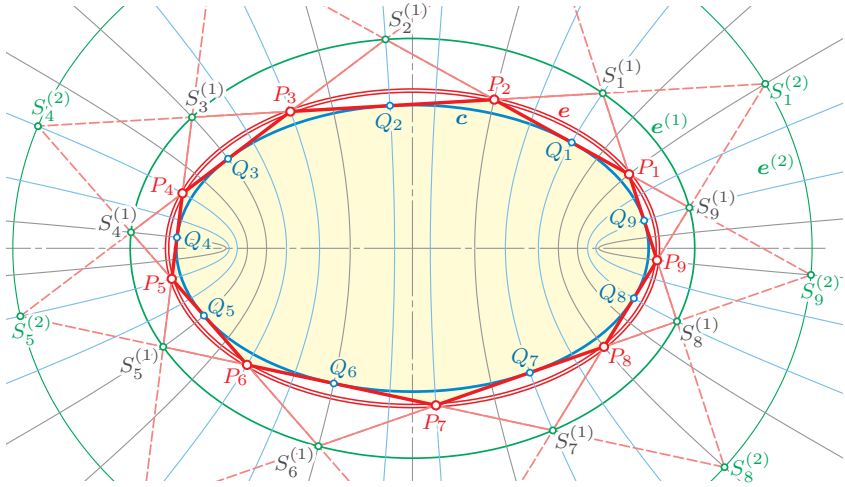} 
  \caption{Periodic billiard $P_1P_2\dots P_9$ in $e$ with the turning number $\tau\!=\!1$ and the caustic $c$ as well as a part of the associated Poncelet grid. 
 The extended sides form a billiard with $\tau=2$ in $e^{(1)}$ and three triangles as billiards in $e^{(2)}$.}
  \label{fig:Poncelet_grid2}
\end{figure}

\smallskip
From \eqref{eq:k_h} follows
\[  k_h = -\frac{a_c^2\tan^2 t + b_c^2}{1 + \tan^2 t}, \quad\mbox{hence}\quad
  \tan^2 t(a_c^2 + k_h) = -b_c^2 - k_h
\]
and furthermore
\begin{equation}\label{eq:st_ct}
  \sin t\cos t = \frac{\tan t}{1 + \tan^2 t} 
    = \frac{\sqrt{-(b_c^2 + k_h)(a_c^2 + k_h)}}{a_c^2 - b_c^2}
    = \frac{a_h\, b_h}{d^2}                          
\end{equation}   
with $a_h$ and $b_h$ as semiaxes of the hyperbola corresponding to the parameter $t$, i.e., $a_h^2 = a_c^2 + k_h$ and $b_h^2 = -(b_c^2 + k_h)$.

\medskip
Let $\dots P_1P_2P_3\dots$ be a billiard in the ellipse $e$. 
Then the extended sides intersect at points 
\begin{equation}\label{eq:S_i^j}
  S_i^{(j)}:= \left\{ \begin{array}{rl}
  [P_{i-k-1},P_{i-k}]\cap[P_{i+k},P_{i+k+1}] &\zwi\mbox{for} \ j = 2k,
   \\[0.6mm] 
  [P_{i-k},P_{i-k+1}]\cap[P_{i+k},P_{i+k+1}] &\zwi\mbox{for} \ j = 2k-1,
 \end{array} \right.  
\end{equation}
where $i = \dots,1,2,3,\dots$ and $j= 1,2,\dots$.
These points are distributed on different confocal conics:
For fixed $j$, there are ellipses $e^{(j)}$ passing through the points $S_i^{(j)}$.
On the other hand, the points $S_i^{(2)}$, $S_i^{(4)},\dots$ are located on the confocal hyperbola through $P_i$, while $S_i^{(1)}$, $S_i^{(3)}, \dots$ belong to the confocal hyperbola through the contact point $Q_i$ between the side $P_i P_{i+1}$ and the caustic $c\,$.
This configuration is called the associated {\em Poncelet grid} (\Figref{fig:Poncelet_grid2}).
For periodic billiards the sets of points $S_i^{(j)}$ and associated conics are finite.
The {\em turning number} $\tau$ of a periodic billiard in $e$ with an ellipse as caustic counts how often one period of the billiard surrounds the center $O$ of $e$ (note \Figref{fig:Poncelet_grid2}).

For each billiard $P_1P_2\dots$ in $e$ with caustic $c$ there exists a {\em conjugate billiard} $P_1'P_2'\dots$ in $e$ with the same caustic.
The axial scaling $c\to e$ maps the contact point $Q_i\in c$ of $P_iP_{i+1}$ to $P_i'$ while the inverse scaling brings $P_i$ to the contact point $Q_{i-1}'$ of $P_{i-1}'P_i'$ with the caustic.
The relation between these billiards is symmetric.
For further details see \cite[Sect.~3.2]{Sta_I}.

\section{Velocity analysis}

Let the first vertex of a billiard $P_1P_2\dots $ move smoothly along the circumscribed ellipse $e$.
Then this induces a continuous variation of all other vertices along $e$ and also of the intersection points $S_i^{(j)}$ along $e^{(j)}$.
We call this a {\em billiard motion}, though it neither preserves angles or distances nor is an affine or projective motion. 

According to Graves's construction \cite[p.~47]{Conics}, we can conceive the periodic billiard $P_1P_2\dots P_N$ as a flexible chain of fixed total length $L_e$ and the caustic $c$ as a fixed non-circular chain wheel.
The vertices $P_1,P_2,\dots$ move along $e$ and relative to the chain such that they keep the chain strengthened, while the chain contacts $c$ only at the single points $Q_1,Q_2,\dots,Q_N$. 

Let us pick out a single vertex $P_2$ (see \Figref{fig:Graves}).
In the language of kinematics, the line spanned by the straight segment $Q_1P_2$ rolls at $Q_1$ on $c$ (=\,fixed polode) while point $P_2$ moves along the line (=\,moving polode) with the velocity vector $\Vkt v_{t_1}$ .
The instantaneous rotation about $Q_1$ with the angular velocity $\omega_1$ assigns to $P_2$ a velocity vector $\Vkt v_{n_1}$ orthogonal to $Q_1P_2$ in order to keep the vector of absolute velocity of $P_2\,$, namely $\Vkt v_2 = \Vkt v_{t_1} + \Vkt v_{n_1}$, tangent to the ellipse $e$.

\begin{figure}[hbt] 
  \centering 
  \psfrag{F1}[rt]{\sz $F_2$}
  \psfrag{F2}[lt]{\sz $F_1$}
  \psfrag{X}[lb]{\sz\red $P_2$}
  \psfrag{X}[lb]{\sz\red $P_2$}
  \psfrag{tP}[lc]{\sz $t_P$}
  \psfrag{T1}[lb]{\sz\blue $Q_1$}
  \psfrag{T2}[rb]{\sz\blue $Q_2$}
  \psfrag{la}[cb]{\sz\blue $r_2$}
  \psfrag{l2}[lt]{\sz\blue $l_2$}
  \psfrag{e}[lb]{\red $\boldsymbol{e}$}
  \psfrag{e0}[rt]{\blue $\boldsymbol{c}$}
  \psfrag{vr1}[rt]{\sz $\Vkt v_{t_1}$}
  \psfrag{vn1}[rb]{\sz $\Vkt v_{n_1}$}
  \psfrag{vr2}[rb]{\sz $\Vkt v_{t_2}$}
  \psfrag{vn2}[rt]{\sz $\Vkt v_{n_2}$}
  \psfrag{v}[rb]{\sz $\Vkt v_2$}
  \psfrag{om1}[ct]{\sz $\omega_1$}
  \psfrag{om2}[lt]{\sz $\omega_2$}
  \psfrag{theta}[cc]{\blue $\frac{\theta_2}2$} 
  \includegraphics[width=80mm]{\pfad 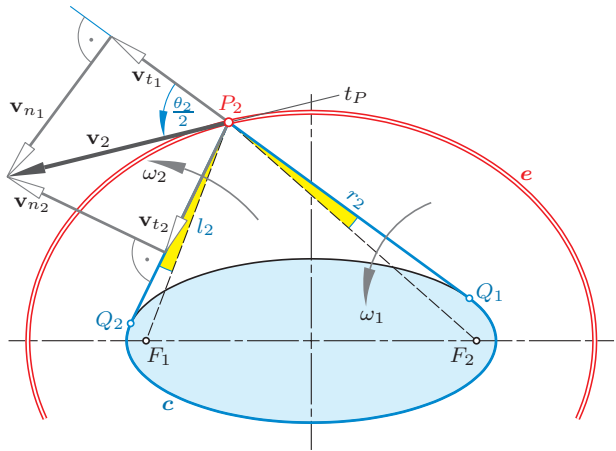} 
  \caption{{Graves}'s string construction of an ellipse $e$ confocal to $c$.}
  \label{fig:Graves}
\end{figure}

Similarly, we have a second decomposition $\Vkt v_2 = \Vkt v_{t_2} + \Vkt v_{n_2}$, since at the same time the line $[Q_2,P_2]$ rotates about $Q_2$ with the angular velocity $\omega_2$, while $P_2$ moves relative to this line.
Due to the constant length of the chain, the tangential components in these two decompositions must be of equal lengths $\Vert\Vkt v_{t_2}\Vert = \Vert\Vkt v_{t_1}\Vert$.
Since the tangent $t_P$ to $e$ at $P_2$ bisects the exterior angle of $Q_1P_2Q_2$, the second decomposition is symmetric w.r.t.\ $t_P$ to the first one.
From $\Vert\Vkt v_{n_2}\Vert = \Vert\Vkt v_{n_1}\Vert$ follows for the distances $r_2:= \ol{P_2Q_1}$ and $l_2:= \ol{P_2Q_2}$
\begin{equation}\label{eq:vn}
   l_2\,\omega_2 = r_2\,\omega_1, \quad \mbox{or} \quad
    \frac{\omega_1}{\omega_2} = \frac{l_2}{r_2},
\end{equation}
and similarly for all other vertices.
If the billiard is $N$-periodic, then the product of all ratios $l_i/r_i$ for $i=1,\dots,N$ yields
\[ \frac{l_1}{r_1}\cdot \frac{l_2}{r_2} \cdots \frac{l_N}{r_N} = 
    \frac{\omega_N}{\omega_1}\cdot \frac{\omega_1}{\omega_2}\cdots
    \frac{\omega_{N-1}}{\omega_N} = 1,
\]   
which results in the equation 
\begin{equation}\label{eq:r1..rN}  
  l_1 l_2\dots l_N = r_1 r_2\dots r_N.
\end{equation}
listed as k116 in \cite[Table~2]{80}.

Figure~\ref{fig:vel_gesamt} shows a graphical velocity analysis for the billiard motion of a 5-sided periodic billiard in $e$.
We can begin this analysis by choosing an arbitrary length for the arrow representing the velocity vector $\Vkt v_2$ of $P_2$.
This defines the two components $\Vkt v_{t_2}$ and $\Vkt v_{n_2}$, where the latter determines the angular velocity $\omega_2$ of the side $P_2P_3$ and furtheron the absolute velocity $\Vkt v_3$ of $P_3$.
This can be continued.
From now on, we denote the norms $\Vert\Vkt v_{t_1}\Vert = \Vert\Vkt v_{t_2}\Vert$ and $\Vert\Vkt v_{n_1}\Vert = \Vert\Vkt v_{n_2}\Vert$ of the respective components of the velocity vector $\Vkt v_i$ of $P_i$ with $v_{t|i}$ and $v_{n|i}$.

\begin{figure}[hbt] 
  \centering 
  \psfrag{P1}[lb]{\sz\red $P_1$}
  \psfrag{P2}[lb]{\sz\red $P_2$}
  \psfrag{P3}[rb]{\sz\red $P_3$}
  \psfrag{P4}[rt]{\sz\red $P_4$}
  \psfrag{P5}[ct]{\sz\red $P_5$} 
  \psfrag{T1}[rt]{\sz\blue $Q_1$}
  \psfrag{T2}[lt]{\sz\blue $Q_2$}
  \psfrag{T3}[lb]{\sz\blue $Q_3$}
  \psfrag{T4}[cb]{\sz\blue $Q_4$}
  \psfrag{T5}[rc]{\sz\blue $Q_5$}
  \psfrag{Q1}[lc]{\sz\red $R_1$}
  \psfrag{Q2}[rc]{\sz\red $R_2$}
  \psfrag{Q3}[rt]{\sz\red $R_3$}
  \psfrag{Q4}[rt]{\sz\red $R_4$}
  \psfrag{Q5}[lc]{\sz\red $R_5$}
  \psfrag{c}[lb]{\red $\boldsymbol{e}$}
  \psfrag{c_}[lb]{\blue $\boldsymbol{c}$}
  \psfrag{l1}[lb]{\sz\red $l_1$} 
  \psfrag{r1}[lb]{\sz\red $r_2$}
  \psfrag{l2}[rb]{\sz\red $l_2$}
  \psfrag{r2}[rb]{\sz\red $r_3$}
  \psfrag{vr1}[rt]{\sz\red $\Vkt v_{t_1}$}
  \psfrag{vn1}[rb]{\sz\red $\Vkt v_{n_1}$}
  \psfrag{vr2}[rb]{\sz\red $\Vkt v_{t_2}$}
  \psfrag{vn2}[rt]{\sz\red $\Vkt v_{n_2}$}
  \psfrag{v}[rb]{\sz\red $\Vkt v$}
  \psfrag{the1o}[rb]{\sz\green $\theta_1\mskip -2mu/2$}
  \psfrag{the1u}[lc]{\sz\green $\theta_1\mskip -2mu/2$}
  \psfrag{the2r}[rc]{\contourlength{1.6pt}\contour{white}{\sz\green $\theta_2\mskip -1mu/2$}}
  \psfrag{the2l}[cb]{\sz\green $\theta_2\mskip -1mu/2$}
  \psfrag{om1}[ct]{\sz\blue $\omega_1$}
  \psfrag{om2}[lc]{\sz\blue $\omega_2$}
  \psfrag{om3}[lt]{\contourlength{1.5pt}\contour{gelba}{\sz\blue $\omega_3$}}
  \psfrag{om4}[rc]{\sz\blue $\omega_4$}
  \psfrag{om5}[rb]{\contourlength{1.5pt}\contour{gelba}{\sz\blue $\omega_5$}}
  \psfrag{v2}[lt]{\contourlength{1.6pt}\contour{white}{\sz $\Vkt v_2$}}
  \psfrag{vt2}[lb]{\sz $\Vkt v_{t_2}$}
  \psfrag{vn2}[rb]{\sz $v_{n|2}$} 
  \psfrag{v1}[lb]{\contourlength{1.6pt}\contour{white}{\sz $\Vkt v_1$}}
  \psfrag{v3}[rb]{\contourlength{1.6pt}\contour{white}{\sz $\Vkt v_3$}}
  \psfrag{v4}[rt]{\contourlength{1.6pt}\contour{white}{\sz $\Vkt v_4$}}
  \psfrag{v5}[lt]{\contourlength{1.6pt}\contour{white}{\sz $\Vkt v_5$}}
  \includegraphics[width=105mm]{\pfad 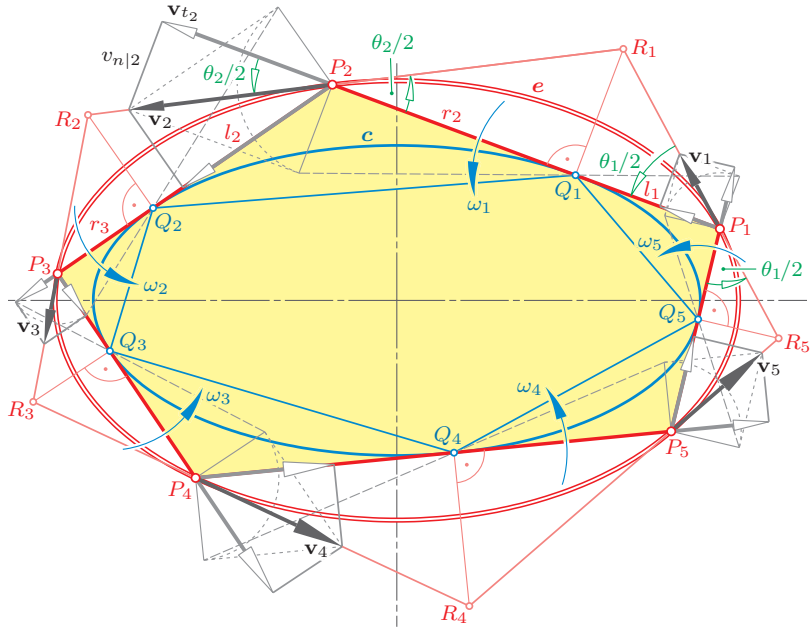} 
  \caption{Velocities of the vertices $P_1, P_2, \dots, P_5$ of a periodic billiard in the ellipse $e$ with the caustic $c\,$.}
  \label{fig:vel_gesamt}
\end{figure}

In terms of the exterior angles $\theta_1,\dots,\theta_N$ of the billiard, we obtain from \eqref{eq:vn}
\begin{equation}\label{eq:sin,cos_theta}
   \sin\Frac{\theta_2}2 = \Frac{l_2\,\omega_2}{v_2} = \Frac{r_2\,\omega_1}{v_2} 
   \zwi \mbox{and} \zwi \cos\Frac{\theta_2}2 = \Frac{v_{t|2}}{v_2}\,,\zwi
   \mbox{where} \zwi v_2:= \Vert\Vkt v_2\Vert.
\end{equation}
Let $R_i$ denote the pole of the line $[P_i,P_{i+1}]$ with respect to (w.r.t.\ in brief) $e\,$.
Since the poles of a line $\ell$ w.r.t.\ confocal conics lie on a line orthogonal to $\ell$, the side $P_1P_2$ is orthogonal to $[Q_1,R_1]$ (\Figref{fig:vel_gesamt}), which means
\begin{equation}\label{eq:tan_theta}
  \ol{R_1Q_1} = l_1\tan\frac{\theta_1}2 = r_2\tan\frac{\theta_2}2\,.
\end{equation}
From \eqref{eq:tan_theta} and \eqref{eq:sin,cos_theta} follows
\[  l_1\tan\frac{\theta_1}2 = l_1\frac{l_1\omega_1}{v_{t|1}} 
    = r_2\tan\frac{\theta_1}2 = r_2\frac{r_2\omega_1}{v_{t|2}}
   \zwi\mbox{and}\zwi
   \frac{v_{t|2}}{v_{t|1}} = \frac{r_2^2}{l_1^2} 
    = \frac{\tan^2\frac{\theta_1}2}{\tan^2\frac{\theta_2}2}\,.  
\]
This shows that, by virtue of \eqref{eq:Winkel/2}, the products
\begin{equation}\label{eq:vers2_vt1:vt2}
   v_{t|1}\tan^2\Frac{\theta_1}2 = v_{t|2}\tan^2\Frac{\theta_2}2 = \dots 
   = v_{t|N}\tan^2\Frac{\theta_N}2 = v_{t|i}\Frac{k_e}{\Vert\Vkt t_{c|i}\Vert^2} 
\end{equation}
for $i=1,2,\dots,N$ are equal along the billiard.
We denote this quantity with $C$. 
Instead of a free choice of $v_2\,$, it means no restriction of generality to set
$C = k_e\,$.
Then we obtain for the point $P_i = (a_e\cos t_i,\,b_e\sin t_i)$ of the ellipse $e$, by virtue of \eqref{eq:Winkel/2},
\begin{equation}\label{eq:vt, v global}
 \begin{array}{c}
   v_{t|i} = \Vert\Vkt t_c\Vert^2 = -k_h, \quad
    v_{n|i} = v_i \sin\smFrac{\theta}2 =  \Vert\Vkt t_c\Vert\,\sqrt{k_e} = \sqrt{-k_e k_h} 
  \\[2.0mm]
   v_i = \Frac{\Vert\Vkt t_c\Vert^2}{\cos\frac{\theta}2} 
      = \Vert\Vkt t_c\Vert\,\Vert\Vkt t_e\Vert = \sqrt{k_h(k_h - k_e)}\zwi
   \mbox{for}\zwi t = t_i\zwi\mbox{and}\zwi k_h = k_h(t_i)\,. 
 \end{array}
\end{equation}

\section{Billiard motion and the underlying Lie group}

Our specification of the quantity $C$ assigns to the vertex $P_i\in e$ with parameter $t_i$ a non-vanishing velocity vector $\Vkt v_i = \Vert\Vkt t_c(t_i)\Vert\,\Vkt t_e(t_i)\,$.
This assignment can immediately be extended to all points of $e\,$.
There exists a parameter $u$ on $e$ such that the differentiation by $u$ gives the said velocity vector.
Let a dot indicate this differentiation.
Then
\begin{equation}\label{eq:v von t}
  \Vkt v(t) = \dotVkt p(t) = \Frac{\mathrm d\,\Vkt p(t)}{\mathrm d u} 
   = \Vert\Vkt t_c(t)\Vert\,\Vkt t_e(t) 
   = \sqrt{a_c^2\sin^2 t + b_c^2\cos^2 t} \ \Vkt t_e(t).
\end{equation}
We can even extend this to all confocal ellipses of $c\,$.
The assignment of a velocity vector
\[ \Vkt v(t) = \Vert\Vkt t_c(t)\Vert\,\Vkt t_e(t) = \dot t\;\Vkt t_e(t)
\]
to each point $P = (a_e\cos t,\,b_e\sin t)$ with $a_e^2 - b_e^2 = a_c^2 - b_c^2$  defines an `instant motion' of the plane, where
\begin{equation}\label{eq:dot t} 
   \dot t = \frac{\mathrm d t}{\mathrm d u} = \Vert\Vkt t_c(t)\Vert
    = \sqrt{-k_h(t)} = \sqrt{a_c^2\sin^2 t + b_c^2\cos^2 t}\,.
\end{equation}

\begin{figure}[htb] 
  \centering
  \psfrag{c}[lt]{\blue $\boldsymbol{c}$}
  \psfrag{e}[lt]{\red $\boldsymbol{e}$}
  \psfrag{Q}[lt]{\contourlength{1.5pt}\contour{white}{\sz\blue $Q$}}
  \psfrag{tQ}[rb]{\sz $t_Q$}
  \psfrag{vn}[lt]{\sz\red $\Vkt v_n$}
  \psfrag{vt}[lt]{\sz\red $\Vkt v_t$} 
  \includegraphics[width=90mm]{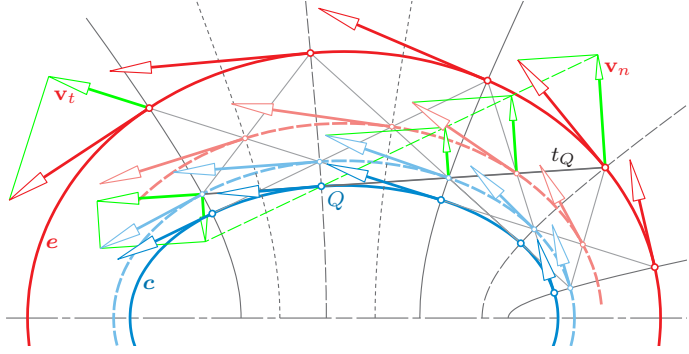} 
  \caption{The infinitesimal motion assigns to each point of the Poncelet grid a velocity vector such that the points on each tangent $t_Q$ to the caustic $c$ remain aligned.}
  \label{fig:vel_field} 
\end{figure}

We prove below, that this `instant motion' is compatible with the billiard and the associated Poncelet grid.
This means in particular, that the velocity \eqref{eq:vt, v global} is also valid for all points $S_i^{(j)}\in e^{(j)}$.

\Figref{fig:vel_field} shows a portion of the Poncelet grid and the velocity vectors of a couple of points, each represented by a scaled arrow.
As indicated, for all points on the tangent $t_Q$ to $c$ at any point $Q\in c$, the normal components $\Vert \Vkt v_n\Vert$ of the respective velocity vectors $\Vkt v$ are proportional to the distance to $Q$.
On the other hand, all points on any confocal hyperbola share the tangential component $\Vert \Vkt v_t\Vert$.

\begin{thm}\label{thm:vel} 
Let the billiard $P_1P_2\dots$ with the ellipse $c$ as caustic be moving along the circumscribed ellipse $e\,$.
Then the motion is the action of a one-parameter Lie group $\Gamma$.
Each transformation $\gamma(u)\in\Gamma$ preserves the confocal ellipses and permutes the confocal hyperbolas as well as the tangents to $c\,$.
\begin{enumerate}
\item If $(a_c,b_c)$ are the semiaxes of the caustic $c$ with the tangent vectors $\Vkt t_c(t) = (-a_c\sin t,\,b_c\cos t)$, then for all confocal ellipses $e$ with semiaxes $(a_e,b_e)$ the $\Gamma$ generating instant motion is defined, up to a scalar, by the vector field
\begin{equation}\label{eq:inf_motion1}
  (x,y) = (a_e\cos t,\,b_e\sin t)\, \ \mapsto\, \Vert\Vkt t_c\Vert \,\Vkt t_e
  = \sqrt{-k_h(t)}\,\left(-\Frac{a_e y}{b_e}, \,\Frac{b_e x}{a_e}\right)
\end{equation}
with $a_e^2 - b_e^2 = a_c^2 - b_c^2 = d^2$ and $a_e^2 - a_c^2 \ge 0$. 

\smallskip
\item If we parametrize the quadrant $x,y > 0$ by elliptic coordinates as $\Vkt X(k_e,k_h)$, then the vector field can be expressed as
\begin{equation}\label{eq:inf_motion2}
  \Vkt X(k_e,\,k_h) \ \mapsto \ -2 \sqrt{k_h(a_c^2 + k_h)(b_c^2 + k_h)}\;
  \frac{\partial\Vkt X}{\partial\, k_h}\,.
\end{equation}
\end{enumerate}
\end{thm}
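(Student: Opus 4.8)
\emph{Plan of proof.} The idea is to derive everything from the single autonomous vector field $\Vkt v$ of \eqref{eq:v von t}, whose restriction to a confocal ellipse $e$ is the scalar equation $\dot t=\Vert\Vkt t_c(t)\Vert$ of \eqref{eq:dot t}. First I would check that \eqref{eq:v von t} is a well-defined smooth field on the exterior of $c$: exactly one confocal ellipse passes through each point off the axes, and the right-hand side of \eqref{eq:v von t} is intrinsic to the point (it does not depend on the billiard drawn through it). Since $\Vkt v=\Vert\Vkt t_c\Vert\,\Vkt t_e$ is a multiple of the tangent vector of the ellipse carrying the point, $\Vkt v$ is tangent to every confocal ellipse, so its flow $\gamma(u)$ maps each of them to itself. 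For completeness it is enough to look at \eqref{eq:dot t}: its right-hand side $\sqrt{a_c^2\sin^2 t+b_c^2\cos^2 t}$ is smooth, $2\pi$-periodic and bounded away from $0$ and $\infty$, so $t(u)$ exists for all $u\in\RR$; moreover the period $\oint\mathrm dt/\sqrt{a_c^2\sin^2 t+b_c^2\cos^2 t}$ does not depend on $e$. Hence $\Gamma=\{\gamma(u)\}_{u\in\RR}$ is a one-parameter Lie group (in fact $\cong S^1$) acting on the exterior of $c$ and on $c$.

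Next I would show that the billiard motion \emph{is} this $\Gamma$-action. For the vertices this is what the velocity analysis of Section~3 already yields: with the normalisation $C=k_e$ fixed, each vertex $P_i$ of a billiard in $e$ carries the velocity $\Vkt v(P_i)=\Vert\Vkt t_c(t_i)\Vert\,\Vkt t_e(t_i)$ of \eqref{eq:vt, v global}, so moving $P_1$ by $\gamma(u)$ moves every $P_i$ by $\gamma(u)$. The hard part will be compatibility with the Poncelet grid, namely that $\Gamma$ permutes the tangents to $c$. The plan is to use the rolling picture behind Graves's construction: a tangent $t_Q$ of $c$ rolls without slipping on $c$ at $Q$, so the material point of $t_Q$ momentarily at $Q$ has zero velocity and the line moves rigidly about $Q$; hence for \emph{every} point $P$ of $t_Q$ the component of $\Vkt v(P)$ normal to $t_Q$ equals $\omega_0(Q)\,\ol{PQ}$, the tangential component being the free sliding velocity of $P$ on the line, and $\omega_0(Q)$ depends on $Q$ alone. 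I regard establishing this last point---equivalently, that the contact point itself moves along $c$ with the intrinsic velocity $\Vkt v(Q)$---as the real obstacle, since it rests on the length relations of Sections~2--3 (at bottom a tangent-length identity on confocal ellipses). Granting it, the normal component of $\Vkt v$ along each tangent of $c$ is affine-linear in position; consequently $\Vkt v$ lifts to the manifold of pointed tangent lines $\{(P,\ell):P\in\ell,\ \ell\text{ tangent to }c\}$ with line-component depending on $\ell$ only, so it descends to a flow on the family of tangents of $c$ that covers $\gamma(u)$. Hence each $\gamma(u)$ carries every tangent of $c$ to a tangent of $c$; in particular the grid points $S_i^{(j)}$ stay on their ellipses $e^{(j)}$ and \eqref{eq:vt, v global} holds for them as well. (The permutation of the tangents also drops out of the computation below via the classical description of the lines tangent to a confocal conic by an Abelian integral in elliptic coordinates, which the flow shifts by a multiple of $u$.)

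For the explicit form \eqref{eq:inf_motion1} nothing is needed beyond substitution: $(x,y)=(a_e\cos t,\,b_e\sin t)$ gives $\cos t=x/a_e$, $\sin t=y/b_e$, hence $\Vkt t_e(t)=(-a_e\sin t,\,b_e\cos t)=(-a_ey/b_e,\,b_ex/a_e)$, while $\Vert\Vkt t_c\Vert=\sqrt{-k_h(t)}$ by \eqref{eq:dot t}; the side conditions $a_e^2-b_e^2=d^2$ and $a_e^2\ge a_c^2$ merely say that $e$ is confocal to and not interior to $c$.

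Finally, for \eqref{eq:inf_motion2}, parametrise the quadrant $x,y>0$ by $\Vkt X(k_e,k_h)$. Along a flow line the point stays on its ellipse, so $\dot k_e=0$ and therefore $\dot{\Vkt X}=\dot k_h\,\partial\Vkt X/\partial k_h$. By \eqref{eq:k_h}, $\mathrm dk_h/\mathrm dt=-2d^2\sin t\cos t$; \eqref{eq:st_ct} gives $d^2\sin t\cos t=a_hb_h=\sqrt{-(a_c^2+k_h)(b_c^2+k_h)}$, and \eqref{eq:dot t} gives $\dot t=\sqrt{-k_h}$, so $\dot k_h=-2\sqrt{-k_h}\,\sqrt{-(a_c^2+k_h)(b_c^2+k_h)}=-2\sqrt{k_h(a_c^2+k_h)(b_c^2+k_h)}$, which is \eqref{eq:inf_motion2}. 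Since $\dot k_h$ depends on $k_h$ alone, the evolution of $k_h$ is the same on every confocal ellipse, so $\gamma(u)$ sends each level set $\{k_h=\const\}$---a confocal hyperbola---to another confocal hyperbola, completing the list of assertions.
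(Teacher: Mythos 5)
Most of your proposal is sound and matches the paper: the preservation of the confocal ellipses (because $\Vkt v$ is a multiple of $\Vkt t_e$ at each point), the coherent permutation of the confocal hyperbolas (because $\dot t=\Vert\Vkt t_c(t)\Vert$ does not depend on which confocal ellipse carries the point), the substitution giving \eqref{eq:inf_motion1}, and the computation $\dot k_h=-2\sqrt{k_h(a_c^2+k_h)(b_c^2+k_h)}$ for \eqref{eq:inf_motion2} (the paper computes $\partial\Vkt X/\partial k_h$ directly instead of $\mathrm dk_h/\mathrm dt$, but it is the same calculation).

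The genuine gap is precisely the step you label ``the real obstacle'' and then grant: that each $\gamma(u)$ carries tangents of $c$ to tangents of $c$. Concretely, you must show that for \emph{every} point $P$ of a tangent $t_Q$ --- including points lying on confocal ellipses $e'\neq e$, where the intrinsic field carries the \emph{different} normalizing constant $k_{e'}$ --- the component of $\Vkt v(P)$ normal to $t_Q$ equals $\omega(Q)\,\ol{PQ}$ with one and the same $\omega(Q)$. The velocity analysis of Section~3 yields this only at the vertices of the billiard in the single ellipse $e$ for which $C=k_e$ was fixed; the rolling picture describes what the flow would do if the tangent lines were already known to be preserved, it does not show that the pointwise-defined field actually fits together into a rigid rotation about $Q$ plus sliding along the line. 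Appealing to the tangent-length formula of Lemma~\ref{lem:liri} would be circular, since that lemma is derived from the very statement being proved, and the parenthetical appeal to the Abelian-integral description of the tangents imports Theorem~\ref{thm:PonceletGrid}, which in the paper comes after (and rests on) Theorem~\ref{thm:vel}. The paper closes the gap by a direct verification: write the incidence $P\in t_Q$ as \eqref{eq:P in tQ}, differentiate it along the flow using $\dot t=\Vert\Vkt t_c(t)\Vert$ and $\dot t'=\Vert\Vkt t_c(t')\Vert$, and confirm that the resulting relation \eqref{eq:Abl_P in tT} is an identity by squaring both sides, eliminating the mixed term $\sin t'\cos t'\sin t\cos t$ via the squared incidence condition, and checking signs. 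Some such computation (or an equivalent identity) is indispensable; without it the compatibility with the Poncelet grid is asserted rather than proved.
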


The vector field defines a canonical parameter $u$ for the one-parameter Lie group $\Gamma$, i.e., for transformations $\gamma(u)\in\Gamma$ holds $\gamma(u_2)\circ\gamma(u_1) = \gamma(u_1 + u_2)$.
At the same time, $u$ provides canonical coordinates\footnote{
Of course, we still obtain canonical coordinates on the ellipses, when all velocity vectors are multiplied with any constant $\lambda\in \mathbb R \setminus{0}$.}
 on each confocal ellipse with the property 
\[  \gamma(2\varDelta u)\!: \ P_i \mapsto P_{i+1}, \ S_i^{(j)} \mapsto S_{i+1}^{(j)}. 
\]

\begin{proof}
1. The first derivative $\dot t$ in \eqref{eq:dot t} is independent of the choice of the ellipse $e$.
Therefore $\gamma(u)$ permutes the confocal hyperboloids.
On the other hand, the representation $\Vkt v = \Vert\Vkt t_c\Vert \,\Vkt t_e$ reveals that all confocal ellipses remain fixed.
Furthermore, we verify that the position of any point $P$ on the tangent $t_Q$ to $c$ at $Q$ (see \Figref{fig:vel_field}) is preserved under the infinitesimal motion:

\noindent
Given $P = \left(a_e\cos t, \,b_e\sin t\right)\in e$ and $Q = 
\left(a_c\cos t', \,b_c\sin t'\right)$, the point $P$ lies on $t_Q$ if and only if 
\begin{equation}\label{eq:P in tQ}
  b_c a_e\cos t'\cos t + a_c b_e\sin t'\sin t = a_c b_c\,.
\end{equation}
This is preserved under the infinitesimal motion if differentiation by $u$ based on \eqref{eq:dot t} yields an identity, namely
\def\arraycolsep{0.8mm}
\begin{equation}\label{eq:Abl_P in tT}
 \begin{array}{l}
   \phantom{-}\Vert\Vkt t_c(t')\Vert \left(-b_c a_e\sin t'\cos t + a_c b_e\cos t'\sin t\right)\phantom{.}
  \\[1.5mm]
  = -\Vert\Vkt t_c(t)\Vert \left(-b_c a_e\cos t'\sin t 
  + a_c b_e\sin t'\cos t \right). 
 \end{array}
\end{equation}
In order to verify this, we square both sides and substitute from the squared equation \eqref{eq:P in tQ} the mixed term $2 a_c b_c a_e b_e \sin t' \cos t' \sin t \cos t\,$.
After some computations, this yields for both sides
\[  d^2\left(\sin^2 t - \sin^2 t'\right)\left(a_c^2 b_e^2\sin^2 t'\sin^2 t 
   + b_c^2 a_e^2 \cos^2 t'\cos^2 t - a_c^2 b_c^2\right).
\] 
The velocity analysis in \eqref{eq:vt, v global} for the particular ellipse $e$ confirms, that also the signs of both sides in \eqref{eq:Abl_P in tT} are equal.

\medskip\noindent
2. From \eqref{eq:ell_in_cart} follows for $(x,y) = \Vkt X(k_e,\,k_h)$
\[  2x\,\frac{\partial x}{\partial k_h} = \frac{a_c^2 + k_e}{d^2}\,, \quad
    2y\,\frac{\partial y}{\partial k_h} = -\frac{b_c^2 + k_e}{d^2}
\]
and therefore
\[  \Vkt X_{k_h} = \frac{\partial\Vkt X}{\partial\, k_h} 
    = \frac 1{2d^2}\left(\frac{a_c^2+k_e}x, \,-\frac{b_c^2+k_e}y\right)
    = \frac{-1}{2d^2\sin t\cos t}\,\Vkt t_e\,.
\]
This implies by \eqref{eq:k_h} and \eqref{eq:st_ct}
\[ \Vert\Vkt t_c\Vert \,\Vkt t_e = \lambda \Vkt X_{k_h}
   \zwi\mbox{with}\zwi
   \lambda 
    = -2 a_h b_h \sqrt{-k_h}  
    = -2\,\sqrt{k_h(a_c^2 + k_h)(b_c^2 + k_h)}\,, 
\]     
which confirms the claim in \eqref{eq:inf_motion2}.
\end{proof}

\medskip
In order to express the action of the transformation $\gamma(u)\in\Gamma$ on an initial point $(a_e\cos t,\,b_e\sin t)$, we integrate the differential equation \eqref{eq:dot t}
\[ \dot t = \Frac{\mathrm d t}{\mathrm d u} 
    = \sqrt{a_c^2\sin^2 t + (a_c^2 - d^2)\cos^2 t} = a_c \sqrt{1 - m^2\cos^2 t}
\]
with $m := d/a_c < 1$ as numerical eccentricity of the caustic $c$.
The substitution
\[  \varphi:= t - \smFrac{\pi}2
\]
results in
\[  \frac{\mathrm d\varphi}{\sqrt{1 - m^2\sin^2\varphi}} = a_c\,\mathrm d u\,.
\]
The initial condition $\varphi = 0$ for $u=0$ yields the unique solution
\begin{equation}\label{eq:ell_int}
   a_c\,u(\varphi) = F(\varphi, m) 
    = \int_0^{\varphi} \frac{\mathrm d\varphi}{\sqrt{1 - m^2\sin^2\varphi}}
\end{equation}
with $F(\varphi, m)$ as the elliptic integral of the first kind with the {\em modulus} $m$.
The equation~\eqref{eq:ell_int} shows the canonical coordinate $u$ in terms of $\varphi$ with the {\em quarter period} 
\[  K:= a_c\,u \left(\smFrac{\pi}2\right) = \int_0^{\pi/2} 
    \frac{\mathrm d\varphi}{\sqrt{1 - m^2\sin^2\varphi}} 
\]    
For the sake of simplicity, we introduce 
\begin{equation}\label{eq:wt_u}
  \wt u(\varphi):= a_c\,u(\varphi)
\end{equation}
as a new canonical coordinate.

The inverse function of $\wt u = F(\varphi, m)$, namely the Jacobian {\em amplitude} $\varphi = \mathrm{am}\mskip 1mu(\wt u)$ leads to the Jacobian elliptic functions, the {\em elliptic sine} 
\[  \SN\wt u = \sin(\mathrm{am}(\wt u)) = \sin\varphi = -\cos t
\]
with $\Sn(-\wt u) = -\SN\wt u\,$, the {\em elliptic cosine}
\[ \CN\wt u = \cos(\mathrm{am}(\wt u)) = \cos\varphi = \sin t 
\]
with $\Cn(-\wt u) = \CN\wt u\,$, and the {\em delta amplitude}
\[  \DN\wt u = \sqrt{1 - m^2 \,\mathrm{sn}^2\wt u}
\]
with $\Dn(-\wt u) = \DN\wt u\,$ as the third elliptic base function \cite{Hoppe}.
Moreover, for $k\in\mathbb Z$ holds
\[ \begin{array}{c}
    \Sn(\wt u + 2kK) = (-1)^k \SN\wt u\,, \quad
    \Cn(\wt u + 2kK) = (-1)^k \CN\wt u\,,
    \\[1mm]
    \Dn(\wt u + 2kK) = (-1)^k \DN\wt u\,.
  \end{array}
\]

This gives rise to the canonical parametrization of the ellipse $e$ with semiaxes $(a_e,b_e)$ as
\[  \left(-a_e\SN\wt u,\,b_e\CN\wt u \right) \quad
    \mbox{for}\zwi 0 \le \wt u < 4K = 4\wt u\left(\ssmFrac{\pi}2\right).
\] 

\noindent
As an alternative, we can proceed with elliptic coordinates.
From \eqref{eq:inf_motion2} and 
\[  \frac{\mathrm d\Vkt X}{\mathrm du} 
    = \dot k_e\,\frac{\partial\Vkt X}{\partial\, k_e} + 
    \dot k_h\,\frac{\partial\Vkt X}{\partial\, k_h} = -2\,\sqrt{
    k_h(a_c^2 + k_h)(b_c^2 + k_h)}\;\frac{\partial\Vkt X}{\partial\, k_h}
\]
follows for the orbits of the Lie group $\dot k_e = 0$ and
\[ \dot k_h = -2\,\sqrt{k_h(a_c^2 + k_h)(b_c^2 + k_h)}\,.
\]
As expected, the orbits are confocal ellipses.
In order to express the action of $\gamma(u)\in\Gamma$ on an initial point $\Vkt X(k_{e|0},\,k_{h|0})$, we have to integrate the differential equation 
\begin{equation}\label{eq:diff_gl_kh}
  \frac{\mathrm dk_h}{\sqrt{k_h(a_c^2 + k_h)(b_c^2 + k_h)}} = -2\,\mathrm du
\end{equation}
with any initial condition. 
Again, we face an elliptic integral, this time in the socalled Riemannian form.

\begin{thm}\label{eq:action} 
\begin{enumerate} 
\item Let $c$ be the ellipse $c$ with semiaxes $(a_c,b_c)$ and linear eccentricity $d = \sqrt{a_c^2 - b_c^2}\,$.
Then for all confocal ellipses $e$ with semiaxes $(a_e,b_e)$, the inscribed billiards with the  caustic $c$ can be canonically parametrized using the Jacobian elliptic functions to the modulus $m = d/a_c$ (=\,numerical eccentricity of $c$) as
\[  \left(-a_e\SN\wt u,\ b_e\CN\wt u \right).
\] 
This means that, if $b_c = b_e\Cn(\varDelta\wt u)$, then the vertices of the billiards in $e$ have the canonical parameters $\wt u = (\wt u_0 + 2k\varDelta\wt u)$ for $k\in\mathbb Z$ and any given initial $\wt u_0\,$.
\item Conversely, we obtain an ellipse $e$ for which the billiards with caustic $c$ are $N$-periodic with turning number $\tau$, where $\mathrm{gcd}(N,\tau) = 1$, by the choice
\[  \varDelta\wt u = \frac{2\tau K}N
\]
with $K$ as the complete elliptic integral of the first kind to the modulus $m$, provided that 
\begin{equation}\label{eq:e zu Delta_u}
    a_e = \frac{a_c\Dn(\varDelta\wt u)}{\Cn(\varDelta\wt u)}\zwi\mbox{and}\quad 
    b_e = \frac{b_c}{\Cn(\varDelta\wt u)}\,.
\end{equation}
\end{enumerate}
\end{thm}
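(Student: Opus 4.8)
The plan is to isolate the one genuinely new ingredient. That $e$ carries the canonical parametrization $\wt u\mapsto(-a_e\SN\wt u,\,b_e\CN\wt u)$ and that the billiard map between consecutive vertices acts on $\wt u$ as a translation $\wt u\mapsto\wt u+2\varDelta\wt u$ have already been obtained by integrating \eqref{eq:dot t} in the passage preceding the statement; what is left for part~(1) is to pin down $\varDelta\wt u$, that is, to show $\Cn(\varDelta\wt u)=b_c/b_e$, which together with the confocality $a_e^2-b_e^2=a_c^2-b_c^2=d^2$ is equivalent to the pair \eqref{eq:e zu Delta_u}. I would prove the clean reformulation: \emph{the chord of $e$ joining the points with canonical parameters $w-\varDelta\wt u$ and $w+\varDelta\wt u$ is tangent to $c$ for every $w$ if and only if $\Cn(\varDelta\wt u)=b_c/b_e$}. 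Once this is in hand part~(1) is immediate, since consecutive billiard vertices lie at canonical parameters differing by $2\varDelta\wt u$ (tangency of a chord of $e$ to a confocal conic is exactly the billiard reflection law), so the vertices are the points $\wt u_0+2k\varDelta\wt u$, $k\in\ZZ$, and, conversely, every such arithmetic progression produces a billiard.

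For the reformulation I would work in the normalized coordinates $(x/a_e,\,y/b_e)$, in which $e$ becomes the unit circle and the chord through eccentric angles $t_i,t_{i+1}$ is $\frac{x}{a_e}\cos\sigma+\frac{y}{b_e}\sin\sigma=\cos\delta$ with $\sigma=\tfrac12(t_i+t_{i+1})$, $\delta=\tfrac12(t_{i+1}-t_i)$; the condition that this line be tangent to $c$ is
\[
  \frac{a_c^2}{a_e^2}\cos^2\sigma+\frac{b_c^2}{b_e^2}\sin^2\sigma=\cos^2\delta .
\]
Now substitute $\cos t_i=-\Sn(w-\varDelta\wt u)$, $\sin t_i=\Cn(w-\varDelta\wt u)$, and likewise for $t_{i+1}$ with $w+\varDelta\wt u$, and use the Jacobi addition theorems for $\mathrm{sn}$ and $\mathrm{cn}$ to express $\cos\sigma\cos\delta$, $\sin\sigma\cos\delta$, $\cos\sigma\sin\delta$, $\sin\sigma\sin\delta$ --- and hence $\cos^2\sigma$, $\sin^2\sigma$, $\cos^2\delta$ --- as rational functions of $\Sn w,\Cn w,\Dn w$ and $\Sn(\varDelta\wt u),\Cn(\varDelta\wt u),\Dn(\varDelta\wt u)$, all over the common denominator $1-m^2\Sn^2 w\,\Sn^2(\varDelta\wt u)$. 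Clearing that denominator and replacing $\Cn^2$ by $1-\Sn^2$ reduces the tangency relation to an affine identity in $\Sn^2 w$; it holds for all $w$ precisely when its constant term and its $\Sn^2 w$-coefficient both vanish, and these two equations are $\Cn^2(\varDelta\wt u)=b_c^2/b_e^2$ and $a_c^2\Dn^2(\varDelta\wt u)=a_e^2\Cn^2(\varDelta\wt u)$, the second of which follows from the first once $m^2=d^2/a_c^2$ and $a_e^2-b_e^2=d^2$ are inserted. This addition-formula bookkeeping is the only real labour of the proof; the one subtlety is the sign of the half-angle quantities, which must be tracked so that the solution is picked out in $0<\varDelta\wt u<K$ rather than at a spurious companion value. (Alternatively, one can bypass the computation with the group $\Gamma$ of \Thmref{thm:vel}: $\Gamma$ carries the $c$-tangent chords of $e$ to $c$-tangent chords of $e$, shifting both endpoints by the same amount of canonical parameter, so it preserves their canonical length; as $\Gamma$ sweeps the contact point through all of $c$, that length is a constant, and evaluating it on the tangent $y=b_c$ to $c$ --- whose two endpoints on $e$ have canonical parameters $\pm\wt u^\ast$ with $\Cn(\wt u^\ast)=b_c/b_e$ --- gives $2\varDelta\wt u=2\wt u^\ast$; that these exhaust the chords of that canonical length uses only that a line is tangent to at most one conic of a confocal family.)

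For part~(2) I set $\varDelta\wt u:=2\tau K/N$. Since $\gcd(N,\tau)=1$ excludes $\tau=N/2$, we may assume $2\tau<N$ (replacing $\tau$ by $N-\tau$ leaves $e$ unchanged), so $0<\varDelta\wt u<K$, where $\Dn(\varDelta\wt u)>\Cn(\varDelta\wt u)>0$; hence the right-hand sides of \eqref{eq:e zu Delta_u} define positive semiaxes with $a_e>a_c$ and $b_e>b_c$. A short computation gives $a_e^2-b_e^2=\bigl(a_c^2\Dn^2(\varDelta\wt u)-b_c^2\bigr)/\Cn^2(\varDelta\wt u)=d^2$ (using $a_c^2m^2=d^2$), so $e$ is genuinely confocal to $c$, and $\Cn(\varDelta\wt u)=b_c/b_e$ by construction, so part~(1) applies: the billiards in $e$ with caustic $c$ have their vertices at the canonical parameters $\wt u_0+2k\varDelta\wt u$. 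As $\SN$ and $\CN$ have primitive period $4K$ and $2N\varDelta\wt u=4\tau K$, this progression returns to its starting point after exactly $N$ steps, and not after any $N'<N$, since $N\mid N'\tau$ together with $\gcd(N,\tau)=1$ would force $N\mid N'$; hence the billiard closes with least period $N$. Finally, over one period the canonical parameter increases by $4\tau K$, so the eccentric angle $t$ makes exactly $\tau$ full turns, and the closed polygon winds $\tau$ times about the centre $O$: its turning number is $\tau$.
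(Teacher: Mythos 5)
Your proof is correct, but its main line of argument is genuinely different from the paper's. The paper disposes of part (1) in two sentences by leaning on \Thmref{thm:vel}: since the group $\Gamma$ permutes the tangents of $c$ while shifting the canonical parameter uniformly, the parameter gap across a tangent chord is constant, so it suffices to evaluate at the symmetric position $P_1=(0,b_e)$, $\wt u=0$; there $P_1$ and the contact point $Q_1$ (at parameter $\varDelta\wt u$) are conjugate w.r.t.\ $c$, and \eqref{eq:P in tQ} gives $b_e\cdot b_c\Cn(\varDelta\wt u)=b_c^2$ at once, the $a_e$-formula then following from $\mathrm{dn}^2+m^2\mathrm{sn}^2=\mathrm{sn}^2+\mathrm{cn}^2=1$ and confocality. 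Your parenthetical ``alternative'' (constant canonical length of tangent chords, evaluated on the tangent $y=b_c$) is essentially this argument. Your primary route --- verifying directly via the Jacobi addition theorems that the chord joining $w\pm\varDelta\wt u$ is tangent to $c$ for all $w$ iff $\Cn(\varDelta\wt u)=b_c/b_e$ --- is self-contained and does not use \Thmref{thm:vel} at all; it even re-derives the constant-shift property. The bookkeeping is in fact cleaner than you suggest: with $s=\SN w$, one gets $\cos^2\sigma=s^2\,\mathrm{dn}^2\varDelta\wt u/\Delta_0$, $\sin^2\sigma=\Cn^2 w/\Delta_0$, $\cos^2\delta=\mathrm{cn}^2\varDelta\wt u/\Delta_0$ with $\Delta_0=1-m^2s^2\,\mathrm{sn}^2\varDelta\wt u$, so the common denominator cancels and the tangency relation is visibly affine in $s^2$, yielding exactly your two equations, the second being implied by the first via $a_c^2\mathrm{dn}^2\varDelta\wt u=b_c^2+d^2\mathrm{cn}^2\varDelta\wt u$. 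What your approach buys is independence from the kinematic machinery, at the cost of the computation; what the paper's buys is brevity, at the cost of resting on the (already proved) group-theoretic statement. For part (2) you supply details --- positivity of the semiaxes for $0<\varDelta\wt u<K$, confocality of the constructed $e$, minimality of the period from $\gcd(N,\tau)=1$, and the identification of the turning number --- that the paper's proof omits entirely; this is a welcome completion rather than a deviation.
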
    

\begin{figure}[htb] 
  \centering
  \psfrag{Q1}[lt]{\sz\blue $Q_1(\varDelta\wt u\,)$}
  \psfrag{P1}[lb]{\sz\red $P_1(0)$}
  \psfrag{P2}[rb]{\sz\red $P_2(2\mskip 1mu\varDelta\wt u)$}
  \psfrag{y}[rt]{\sz $y$}
  \psfrag{c}[lt]{\blue $\boldsymbol{c}$}
  \psfrag{e}[rb]{\red $\boldsymbol{e}$} 
  \includegraphics[width=50mm]{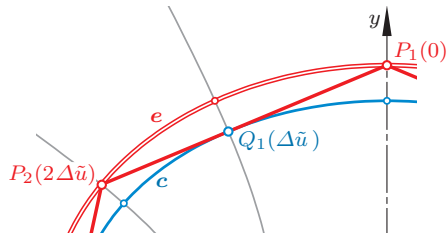}
  \caption{Dependence between the minor semiaxes $b_e$, $b_c$ and the intervall $\varDelta\wt u$.}
  \label{fig:be_bc}
\end{figure}

\begin{proof}
If the first vertex $P_1\in e$ of the billiard is chosen on the positive $y$-axis, i.e., with canonical parameter $\wt u = 0$  (see \Figref{fig:be_bc}), then the first contact point $Q_1$ has the parameter $\varDelta\wt u$, and the tangent to $c$ at $Q_1$ passes through $P_1 = (0,b_e)$.
Hence, the points $P_1$ and $Q_1$ are conjugate w.r.t.\ $c$, which means by \eqref{eq:P in tQ} that the product of the respective $y$-coordinates $b_e$ and $b_c\Cn(\varDelta\wt u)$ equals $b_c^2\,$.

\noindent
In view of the major semiaxis $a_e$ follows from \,$\mathrm{dn}^2(\varDelta\wt u) = 1 - m^2\,\mathrm{sn}^2(\varDelta\wt u)$\, and \,$\mathrm{sn}^2(\varDelta\wt u) + \mathrm{cn}^2(\varDelta\wt u) = 1$\, that \,$\Dn(\varDelta\wt u) = a_e\Cn(\varDelta\wt u)/a_c\,$.
\end{proof}

\begin{cor}\label{cor:e^1 e^2} 
If in the ellipse $e$ with semiaxes $(a_e,b_e)$ the billiard with caustic $c$ is $N$-periodic with turning number $\tau = 1$ and $\varDelta\wt u = \frac{2 K}N\,$, then the associated Poncelet grid contains the ellipses $e^{(1)},\,e^{(2)},\dots,\,e^{(k)}$, $k = \left[\frac{N-3}2\right]$, with respective semiaxes
\[ \begin{array}{c}
    a_{e|1} = \Frac{a_c \Dn(2\mskip 1mu\varDelta\wt u)}
    {\Cn(2\mskip 1mu\varDelta\wt u)}, \ 
     b_{e|1} = \Frac{b_c}{\Cn(2\mskip 1mu\varDelta\wt u)},
    \zwi
    a_{e|2} = \Frac{a_c \Dn(3\mskip 1mu\varDelta\wt u)}
    {\Cn(3\mskip 1mu\varDelta\wt u)}, \
     b_{e|2} = \Frac{b_c}{\Cn(3\mskip 1mu\varDelta\wt u)},
    \\[3.0mm]
    \dots \ , \ a_{e|k} = \Frac{a_c \Dn(k\mskip 1mu\varDelta\wt u)}
     {\Cn(k\mskip 1mu\varDelta\wt u)}, \zwi 
     b_{e|k} = \Frac{b_c}{\Cn(k\mskip 1mu\varDelta\wt u)}\,.
  \end{array}
\]
\end{cor}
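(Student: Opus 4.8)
The plan is to reuse Theorem~\ref{eq:action} by recognizing each intersection point $S_i^{(j)}$ as a vertex of a billiard in $e^{(j)}$ that is again inscribed-and-caustic-bound with the \emph{same} caustic $c$, so that the canonical parametrization transfers verbatim. First I would invoke the claim made in the paragraph before Theorem~\ref{thm:vel}: the instant motion (and hence the Lie group $\Gamma$) acts compatibly on the whole Poncelet grid, so that $\gamma(2\varDelta\wt u)$ sends $S_i^{(j)}\mapsto S_{i+1}^{(j)}$. Thus, on $e^{(j)}$, the points $S_i^{(j)}$ are equally spaced in the canonical parameter with step $2\varDelta\wt u$. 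By the very definition \eqref{eq:S_i^j} of $S_i^{(j)}$, the side $[S_i^{(j)},S_{i+1}^{(j)}]$ lies along an extended side of the original billiard, which is tangent to $c$; consecutive such chords therefore reflect off $e^{(j)}$ with $c$ as caustic, i.e. $S_1^{(j)}S_2^{(j)}\dots$ is a billiard in $e^{(j)}$ with caustic $c$. (For $\tau=1$ one checks the chord $[S_i^{(j)},S_{i+1}^{(j)}]$ is the line $[P_{i-k},P_{i-k+1}]$ or $[P_{i-k-1},P_{i-k}]$ from \eqref{eq:S_i^j}, which is a billiard side and hence touches $c$.)

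Next I would apply Theorem~\ref{eq:action}(1) to the ellipse $e^{(j)}$ with caustic $c$. That theorem says: whenever an ellipse confocal to $c$ carries a billiard with caustic $c$ whose canonical vertex-step is $\varDelta\wt u_{(j)}$, its minor and major semiaxes are $b_{e|j} = b_c/\Cn(\varDelta\wt u_{(j)})$ and $a_{e|j} = a_c\Dn(\varDelta\wt u_{(j)})/\Cn(\varDelta\wt u_{(j)})$. So it remains only to identify the correct step $\varDelta\wt u_{(j)}$ for the $e^{(j)}$-billiard. From \eqref{eq:S_i^j}, $S_i^{(j)}$ involves $P_{i+k}$ and $P_{i-k}$ (with $j=2k-1$, the relevant case when $\tau=1$ since the odd-indexed grid ellipses collect intersections of sides), and the contact point of $[S_i^{(j)},S_{i+1}^{(j)}]$ with $c$ is one of the $Q_\ell$; tracking the canonical parameters $\wt u(P_i) = \wt u_0 + 2(i{-}1)\varDelta\wt u$ and $\wt u(Q_i) = \wt u_0 + (2i{-}1)\varDelta\wt u$ from Theorem~\ref{eq:action}(1), the gap between the $e^{(j)}$-vertex $S_i^{(j)}$ and the adjacent contact point is $(j+1)\varDelta\wt u$, hence the vertex-to-vertex step on $e^{(j)}$ is $2(j+1)\varDelta\wt u$ — wait, more carefully: the contact point sits at the parameter \emph{midway} between consecutive $e^{(j)}$-vertices only after the factor-of-two convention of Theorem~\ref{eq:action} is applied, so the effective $\varDelta$ for the $e^{(j)}$-billiard is $(j+1)\varDelta\wt u$. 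Substituting $\varDelta\wt u_{(j)} = (j+1)\varDelta\wt u$ into the formulas of Theorem~\ref{eq:action}(1) gives exactly $a_{e|j} = a_c\Dn((j{+}1)\varDelta\wt u)/\Cn((j{+}1)\varDelta\wt u)$ and $b_{e|j} = b_c/\Cn((j{+}1)\varDelta\wt u)$, which is the asserted formula after re-indexing $j\mapsto j$ (the Corollary writes $\Dn(2\varDelta\wt u),\Dn(3\varDelta\wt u),\dots$, matching $j=1,2,\dots$).

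Finally I would pin down the range of $j$, i.e. $k = \lfloor (N{-}3)/2\rfloor$. The point is that $e^{(j)}$ degenerates (to a segment, a point, or ceases to exist as a genuine ellipse) once the chords $[S_i^{(j)},S_{i+1}^{(j)}]$ close up or the parameter step $(j{+}1)\varDelta\wt u = 2(j{+}1)K/N$ reaches a half-period. Since $\Cn$ vanishes at the odd multiples of $K$, the semiaxes blow up when $(j{+}1)\varDelta\wt u \equiv K \pmod{2K}$, i.e. $2(j{+}1)/N = 1$, i.e. $j = N/2 - 1$; for $j$ beyond $\lfloor (N{-}3)/2\rfloor$ the construction \eqref{eq:S_i^j} either repeats an earlier ellipse or produces the base ellipse $e$ / the caustic itself, so only $j = 1,\dots,\lfloor(N{-}3)/2\rfloor$ give new ellipses. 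I expect the main obstacle to be precisely this bookkeeping — correctly translating the combinatorial index $j$ in \eqref{eq:S_i^j} into the number of $\varDelta\wt u$-steps and handling the even/odd $j$ distinction together with the turning-number-$1$ hypothesis — rather than any analytic difficulty, since the elliptic-function computation is a direct substitution into Theorem~\ref{eq:action}.
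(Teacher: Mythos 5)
Your proposal is correct and follows what is evidently the paper's intended (but unwritten) derivation: the extended sides form billiards in $e^{(j)}$ with the same caustic $c$ and canonical half-step $(j+1)\varDelta\wt u$ (equivalently, turning number $j+1$ in Theorem~\ref{eq:action}(2)), after which \eqref{eq:e zu Delta_u} gives the semiaxes, and the bound $k=\left[\frac{N-3}2\right]$ comes from the coincidence $e^{(j)}=e^{(N-2-j)}$ and the degeneration at $\Cn = 0$. One local slip worth fixing: a side of the billiard in $e^{(j)}$ joins $S_i^{(j)}$ to $S_{i+j+1}^{(j)}$ (the two grid points on a common extended side $[P_m,P_{m+1}]$), not $S_i^{(j)}$ to $S_{i+1}^{(j)}$, which share no tangent line of $c$; your subsequent parameter count already uses the correct vertex-to-vertex step $2(j+1)\varDelta\wt u$, so the conclusion is unaffected.
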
  

\begin{figure}[htb] 
  \centering
  \def\ssz{\tiny}
  \psfrag{1}[rc]{\contourlength{1.2pt}\contour{white}{\sz\blue $Q_0$}}
  \psfrag{2}[lt]{\contourlength{1.2pt}\contour{white}{\sz\blue $Q_2$}}
  \psfrag{3}[lb]{\contourlength{1.2pt}\contour{white}{\sz\green $S_1^{(1)}$}}
  \psfrag{4}[ct]{\contourlength{1.2pt}\contour{white}{\sz\blue $Q_1$}}
  \psfrag{5}[ct]{\contourlength{1.2pt}\contour{white}{\sz\green $S_0^{(1)}$}}
  \psfrag{6}[rc]{\contourlength{1.2pt}\contour{white}{\sz\green $S_2^{(1)}$}}
  \psfrag{7}[lc]{\contourlength{1.2pt}\contour{white}{\sz\red $P_1$}}
  \psfrag{8}[lb]{\contourlength{1.2pt}\contour{white}{\sz\red $P_2$}}
  \psfrag{9}[cb]{\contourlength{1.2pt}\contour{white}{\sz\red $P_2'$}}
  \psfrag{11}[lb]{\contourlength{1.2pt}\contour{white}{\sz\red $P_1'$}}
  \psfrag{14}[lt]{\contourlength{1.2pt}\contour{white}{\sz\blue $Q_1'$}}
  \psfrag{18}[lt]{\contourlength{1.2pt}\contour{white}{\sz\red $P_0'$}}
  \psfrag{19}[rt]{\contourlength{1.2pt}\contour{white}{\sz\blue $Q_0'$}}
  \psfrag{c}[ct]{\blue $\boldsymbol{c}$}
  \psfrag{e}[lb]{\contourlength{1.2pt}\contour{white}{\red $\boldsymbol{e}$}}
  \psfrag{e^1}[rb]{\green $\boldsymbol{e}^{(1)}$}
  \psfrag{1/2}[ct]{\sz\blue $\boldsymbol 0$} 
  \psfrag{3/4}[ct]{\sz\blue $\frac 12$} 
  \psfrag{1/1}[ct]{\sz\blue $\boldsymbol 1$} 
  \psfrag{5/4}[ct]{\sz\blue $\frac 32$} 
  \psfrag{3/2}[ct]{\sz\blue $\boldsymbol 2$} 
  \psfrag{7/4}[ct]{\sz\blue $\frac 52$} 
  \psfrag{2/1}[ct]{\sz\blue $\boldsymbol 3$}
  \psfrag{9/4}[ct]{\sz\blue $\frac 72$} 
  \psfrag{5/2}[rt]{\sz\blue $u\!=\!\boldsymbol 4$} 
  \psfrag{00/1}[rt]{\contourlength{1.2pt}\contour{white}{\sz\red $v\!=\!\boldsymbol 0$}}
  \psfrag{01/2}[ct]{\contourlength{1.2pt}\contour{white}{\sz\red $\frac 12$}}
  \psfrag{03/4}[ct]{\contourlength{1.2pt}\contour{white}{\sz\red $\frac 34$}}
  \psfrag{01/1}[ct]{\contourlength{1.2pt}\contour{white}{\sz\red $\boldsymbol 1$}}
  \psfrag{05/4}[ct]{\contourlength{1.2pt}\contour{white}{\sz\red $\frac 54$}}
  \psfrag{03/2}[ct]{\contourlength{1.2pt}\contour{white}{\sz\red $\frac 32$}}
  \psfrag{07/4}[ct]{\contourlength{1.2pt}\contour{white}{\sz\red $\frac 74$}}
  \psfrag{02/1}[ct]{\contourlength{1.2pt}\contour{white}{\sz\red $\boldsymbol 2$}}
  \includegraphics[width=100mm]{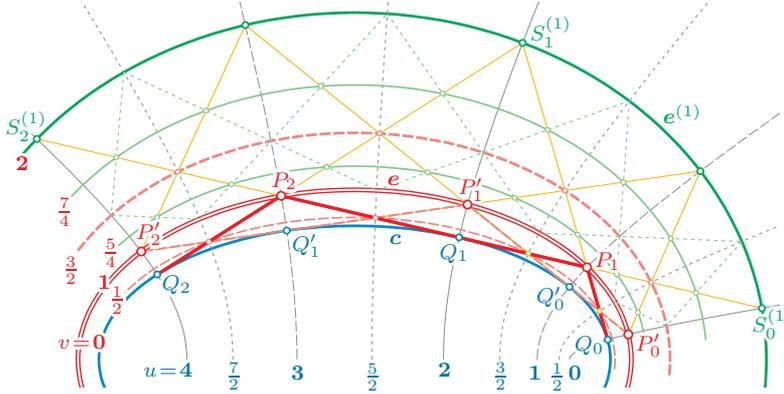} 
  \caption{Canonical coordinates $u$ (blue) for the confocal hyperbolas and $v$ (red) for the confocal ellipses exterior to the caustic $c$ such that $u \pm v = \const$ represent the tangents of the caustic.}
  \label{fig:tangente2}
\end{figure}

\Corref{cor:e^1 e^2} reveals that $\varDelta\wt u$ serves as a canonical coordinate for confocal ellipses in the exterior of $c$.
If $\varDelta\wt u$ corresponds by \eqref{eq:e zu Delta_u} to the ellipse $e$ with semiaxes $(a_e,b_e)$, then $2\mskip 1mu\varDelta\wt u$ is the shift for the billiards in $e$ with caustic $c\,$. 
If these billiards have the turning number 1, then increasing the shift by $\varDelta\wt u$ means to increase the turning number of the billiard in a confocal ellipse by 1, while the caustic $c$ remains fixed  (\Figref{fig:tangente2}).
The billiard $P_1P_2\dots$ and its conjugate $P_1'P_2'\dots$ in $e$ (cf.\ \cite[Sect.~3.2]{Sta_I}) intersect each other along the ellipse with the canonical coordinate $\varDelta\wt u/2\,$.
Note that the ellipses $e^{(j)}$ and $e^{(N-2-j)}$ coincide while the corresponding $\varDelta\wt u$'s differ in their signs.
For even $N$, the points $S_i^{(N/2-1)}$ are at infinity, and the line at infinity as a limit of a confocal ellipse corresponds to $\varDelta\wt u = K$. 

The following formulas express the elliptic coordinates $(k_e,k_h)$ of the point $P = (-a_e\SN\wt u,\,b_e\CN\wt u)$ of $e$ in terms of the canonical coordinate $\wt u$ on $e$ and the shift $\varDelta\wt u$ corresponding to $e\,$.
\begin{equation}\label{eq:ke_kh}
  k_e = k_e(\varDelta\wt u) = \frac{a_c^2\,\mathrm{sn}^2\varDelta\wt u}
   {\mathrm{cn}^2\varDelta\wt u}(1 - m^2),\quad
  k_h = k_h(\wt u) = - a_c^2\,\mathrm{dn}^2\wt u\,.
\end{equation}
This follows from
\[  k_e = a_e^2 - a_c^2 
    = a_c^2\,\frac{\mathrm{dn}^2\varDelta\wt u - \mathrm{cn}^2\varDelta\wt u}
   {\mathrm{cn}^2\varDelta\wt u} 
    = a_c^2(1 - m^2)\frac{\mathrm{sn}^2\varDelta\wt u}{\mathrm{cn}^2\varDelta\wt u} 
\]
and
\[  k_h = -a_c^2\,\mathrm{cn}^2\mskip 2mu\wt u 
    - b_c^2\,\mathrm{sn}^2\mskip 2mu\wt u 
    = -a_c^2 + d^2\,\mathrm{sn}^2\mskip 2mu\wt u
    = -a_c^2 + m^2 a_c^2\,\mathrm{sn}^2\mskip 2mu\wt u\,.
\]  
Note that $k_h = k_h(\wt u)$ is a solution of \eqref{eq:diff_gl_kh}.

\medskip
In \cite{Izmestiev}, an unordered pair of coordinates $(r,s)$ is proposed for each point $P$ in the exterior of $c$, namely with $r$ and $s$ as canonical coordinates of the tangency points for the tangent lines from $P$ to $c$ (see also \cite[p.~358]{MonGeom}).
This means for $P = (-a_e\SN\wt u,\,b_e\CN\wt u)$ that 
\[ r = \wt u - \varDelta\wt u\,,\quad s = \wt u + \varDelta\wt u\,,
\]
where $\varDelta\wt u$ corresponds to $e$ according to \eqref{eq:e zu Delta_u}.

If we keep the sum $\wt u + \varDelta\wt u$ or the difference $\wt u - \varDelta\wt u$ constant, the corresponding point $P$ runs along a tangent of the caustic $c$ (compare with \cite[Prop.~8.3]{Bobenko}).
For the sake of simplicity, we replace in the summary below $\varDelta\wt u$ by $\wt v\,$.

\begin{figure}[htb] 
  \centering
  \def\sz{\small}
  \psfrag{c}[ct]{\blue $\boldsymbol c$}
  \psfrag{e}[lc]{\red $\boldsymbol e$}
  \psfrag{e^1}[lc]{\red $\boldsymbol e^{(1)}$}
  \psfrag{e^2}[lc]{\green $\boldsymbol e^{(2)}$}
  \psfrag{e^3}[lc]{\green $\boldsymbol e^{(3)}\!=\!\boldsymbol e^{(4)}$}
  \psfrag{inf}[lc]{\sz $\infty$}
  \psfrag{4K}[ct]{\sz $4K$}
  \psfrag{K}[rc]{\sz $K$}
  \psfrag{u}[lt]{\sz $\wt u$}
  \psfrag{v}[rb]{\sz $\wt v$}
  \psfrag{Q9}[ct]{\sz\blue $Q_9$}
  \psfrag{Q1}[ct]{\sz\blue $Q_1$}
  \psfrag{Q2}[ct]{\sz\blue $Q_2$}
  \psfrag{P1}[lc]{\contourlength{1.2pt}\contour{white}{\sz\red $P_1$}}
  \psfrag{P2}[lc]{\contourlength{1.2pt}\contour{white}{\sz\red $P_2$}}
  \psfrag{P3}[lc]{\contourlength{1.2pt}\contour{white}{\sz\red $P_3$}}
  \psfrag{S1^1}[lc]{\contourlength{1.2pt}\contour{white}{\sz\dgrau $S_1^{(1)}$}}
  \psfrag{S2^1}[lc]{\contourlength{1.2pt}\contour{white}{\sz\dgrau $S_2^{(1)}$}}
  \psfrag{S1^2}[lc]{\contourlength{1.2pt}\contour{white}{\sz\green $S_1^{(2)}$}}
  \psfrag{S2^2}[lc]{\contourlength{1.2pt}\contour{white}{\sz\green $S_2^{(2)}$}}
  \psfrag{S1^3}[lc]{\contourlength{1.2pt}\contour{white}{\sz\green $S_1^{(3)}$}}
  \psfrag{S2^3}[lc]{\contourlength{1.2pt}\contour{white}{\sz\green $S_2^{(3)}$}}
  \includegraphics[width=110mm]{\pfad 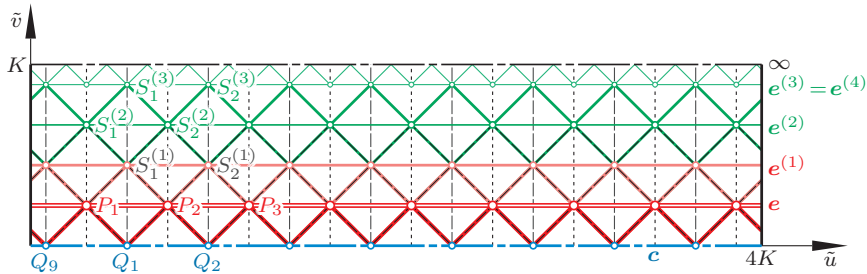} 
  \caption{The injective mapping $\Vkt Y$ sends the square grid of points $Q_i, P_i$ and $S_i^{(j)}$, $i=1,\dots,9\,$, $j=1,\dots,3\,$, to the vertices and the diagonals to the confocal conics of the Poncelet grid depicted in \Figref{fig:Poncelet_grid2}.}
\label{fig:map}
\end{figure}

\begin{thm}\label{thm:PonceletGrid} 
Referring to the notation in \Thmref{eq:action}, the injective mapping
\[ \begin{array}{rl}
    \Vkt Y\!:\ &U\times V \to \RR^2, \quad 
    (\wt u,\,\wt v) \,\mapsto\, \left(-a_c\Frac{\SN\wt u\,\DN\wt v}{\CN\wt v}, \ 
     b_c\Frac{\CN\wt u}{\CN\wt v}\right)
    \\[3.0mm]
    &\mathrm{for}\zwi U:= \{\wt u\,|\,0\le \wt u < 4K\},\
       V:= \{\wt v\,|\,0\le \wt u < K\} 
   \end{array} 
\]
parametrizes the exterior of the caustic $c$ with semiaxes $(a_c,b_c)$ in such a way, that the lines $\wt u = \const$ are branches of confocal hyperbolas; $\wt v = \const$ are confocal ellipses and $\wt u \pm \wt v = \const$ tangents of $c\,$.\end{thm}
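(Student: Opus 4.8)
The plan is to read $\Vkt Y(\cdot,\wt v)$, for each fixed $\wt v$, as the canonical parametrization of a confocal ellipse already produced in \Thmref{eq:action}, and then to identify the other two families of curves with the elliptic–coordinate formula \eqref{eq:ke_kh} and the addition theorems for the Jacobi functions. \emph{Step 1 (confocal ellipses, injectivity, image).} First I would fix $\wt v\in[0,K)$ and put $a_e:=a_c\Dn\wt v/\Cn\wt v$, $b_e:=b_c/\Cn\wt v$, so that $\Vkt Y(\wt u,\wt v)=(-a_e\SN\wt u,\,b_e\CN\wt u)$ is literally the parametrization of \Thmref{eq:action} belonging to the shift $\varDelta\wt u=\wt v$. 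The identity $a_c^2\Dn^2\wt v-b_c^2=a_c^2(1-m^2\Sn^2\wt v)-b_c^2=d^2\Cn^2\wt v$ gives $a_e^2-b_e^2=d^2$, so $e$ is confocal to $c$; moreover $a_e^2-a_c^2=a_c^2(1-m^2)\Sn^2\wt v/\Cn^2\wt v\ge 0$, with equality exactly for $\wt v=0$, which returns the caustic itself. Since $\wt v\mapsto k_e=a_e^2-a_c^2$ is a strictly increasing bijection of $[0,K)$ onto $[0,\infty)$, and $\wt u\mapsto(-a_e\SN\wt u,\,b_e\CN\wt u)$ is a bijection of $[0,4K)$ onto the ellipse $e$, the map $\Vkt Y$ is injective on $U\times V$, its image is the exterior of $c$ (together with $c$ at $\wt v=0$), and the curves $\wt v=\const$ are exactly the confocal ellipses.

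\emph{Step 2 (confocal hyperbolas).} Next I would compute the elliptic coordinate $k_h$ of $P=\Vkt Y(\wt u,\wt v)$. The standard parameter $t$ of $P$ on its ellipse satisfies $\cos t=-\SN\wt u$ and $\sin t=\CN\wt u$, so \eqref{eq:k_h} yields $k_h=-(a_c^2\Cn^2\wt u+b_c^2\Sn^2\wt u)=-a_c^2\Dn^2\wt u$, the value already recorded in \eqref{eq:ke_kh}; crucially it is independent of $\wt v$. Hence for fixed $\wt u$ every point $\Vkt Y(\wt u,\wt v)$ lies on the single confocal conic $k=-a_c^2\Dn^2\wt u$, which is a nondegenerate hyperbola whenever $\wt u\notin\{0,2K\}$ (then $\Dn^2\wt u\in(b_c^2/a_c^2,1)$). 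As $\DN\wt v>0$ and $\CN\wt v>0$ throughout $[0,K)$, the signs of the two coordinates of $\Vkt Y(\wt u,\wt v)$ depend on $\wt u$ alone, so the curve $\wt u=\const$ stays in one open quadrant and is therefore a single branch (one exterior arc) of that hyperbola; conversely every confocal hyperbola occurs, since $-a_c^2\Dn^2\wt u$ sweeps all of $(-a_c^2,-b_c^2)$.

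\emph{Step 3 (tangents of $c$).} Finally, the tangent to $c$ at its point $(-a_c\SN w,\,b_c\CN w)$ of canonical coordinate $w$ is $-x\SN w/a_c+y\CN w/b_c=1$; substituting $P=\Vkt Y(\wt u,\wt v)$ reduces this to
\[
  \SN\wt u\,\SN w\,\DN\wt v+\CN\wt u\,\CN w=\CN\wt v .
\]
The main computation is to verify, by the addition theorems for $\Sn$ and $\Cn$, that $w=\wt u-\wt v$ solves this: expanding $\Sn(\wt u-\wt v)$ and $\Cn(\wt u-\wt v)$ and abbreviating $D=1-m^2\Sn^2\wt u\Sn^2\wt v$, the left-hand side collapses to $(\CN\wt v/D)\bigl(\Sn^2\wt u\Dn^2\wt v+\Cn^2\wt u\bigr)$, and since $\Sn^2\wt u\Dn^2\wt v+\Cn^2\wt u=1-m^2\Sn^2\wt u\Sn^2\wt v=D$ this equals $\CN\wt v$. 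Replacing $\wt v$ by $-\wt v$ (with $\Sn$ odd, $\Cn,\Dn$ even) shows $w=\wt u+\wt v$ is the second solution. Thus $\Vkt Y(\wt u,\wt v)$ lies on the tangents of $c$ at the points with canonical coordinates $\wt u\pm\wt v$, so the lines $\wt u-\wt v=c_0$ and $\wt u+\wt v=c_0$ map into tangents of $c$; for $\wt v\in(0,K)$ these two tangency coordinates are distinct, and since an exterior point has exactly two tangents to $c$ these are all of them. Letting $\wt v\to 0$ one sees that the two lines $\wt u\mp\wt v=c_0$ together sweep out the whole tangent at the point with coordinate $c_0$, which also reconfirms the tangency pair $(r,s)=(\wt u-\wt v,\wt u+\wt v)$ discussed before the theorem.

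I expect the only genuinely computational step to be the addition-theorem identity in Step 3; everything else is bookkeeping on top of \Thmref{eq:action}, the dictionary \eqref{eq:k_h}--\eqref{eq:ke_kh}, and the monotonicity of the Jacobi functions on $[0,K)$. The points needing a little care are the branch/quadrant count in Step 2 and the degenerate value $\wt v=0$, where $\Vkt Y$ returns the caustic $c$ itself rather than a genuine exterior point.
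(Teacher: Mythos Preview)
Your proposal is correct. The paper does not give a separate formal proof of this theorem; it presents the statement as a summary of the discussion immediately preceding it, so the comparison is with that discussion.

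Steps~1 and~2 coincide with the paper's argument: the identification of $\wt v=\const$ with confocal ellipses is exactly \Thmref{eq:action} with $\varDelta\wt u=\wt v$, and the claim that $\wt u=\const$ lies on a fixed confocal hyperbola is the content of~\eqref{eq:ke_kh}, which the paper derives just before the theorem. Your extra remarks on injectivity, the range of $k_e$, and the quadrant/branch count are a useful sharpening that the paper leaves implicit.

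Step~3 is where you genuinely diverge. The paper obtains the tangent property from the billiard interpretation: by \Thmref{eq:action} and the action of $\Gamma$, the two tangents from $P$ (with canonical coordinate $\wt u$ on the ellipse with shift $\varDelta\wt u$) touch $c$ at the contact points $Q_{i-1},Q_i$ with canonical coordinates $\wt u\pm\varDelta\wt u$; hence $\wt u\pm\wt v=\const$ are tangent lines. You instead substitute $\Vkt Y(\wt u,\wt v)$ into the tangent equation and close the computation with the addition theorems for $\Sn$ and $\Cn$. Your route is more self-contained and elementary (it does not need the Lie-group machinery or the identification of $Q_i$'s canonical coordinate), while the paper's route is shorter once that machinery is in place and makes the link to the Poncelet grid transparent. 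Either way, the identity $\Sn^2\wt u\,\Dn^2\wt v+\Cn^2\wt u=1-m^2\Sn^2\wt u\,\Sn^2\wt v$ you isolate is exactly the computational heart of the matter.
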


The domain of the mapping $\Vkt Y$ (\Figref{fig:map}) 
can be extended to $\RR^2$ and satisfies
\[ \Vkt Y((\wt u + 4K),\,\wt v) = \Vkt Y(\wt u,\,(\wt v + 2K))
    = \Vkt Y(\wt u,\,-\wt v) = \Vkt Y(\wt u,\,\wt v) 
\]
and therefore $\Vkt Y(\wt u,\,(K + \wt v)) = \Vkt Y(\wt u,\,(K - \wt v))$
(\Figref{fig:map}).
The Liegroup $\Gamma$ mentioned in \Thmref{thm:vel} is the $\Vkt Y$-transform of the group of translations along the $\wt u$-axis.

\section{More about invariants of periodic billiards}

In this section we study how the infinitesimal motion induced by the vector field in \eqref{eq:inf_motion1} affects distances and angles of periodic billiards.
As before, the dot means differentiation by the canonical parameter $u$, and we call the billiard motion {\em canonical} when it is parametrized by a canonical parameter like $u\,$.

\begin{lem}\label{lem:liri} 
Let $P_1P_2\dots$ be a billiard in the ellipse $e$ with $Q_1,Q_2,\dots$ as contact points with its caustic, the ellipse $c$.
If $t_i$ is the parameter of $P_i$ and $t_i'$ that of $Q_i$, then 
\[ r_i = \ol{Q_{i-1}P_i} = \Frac{\Vert\Vkt t_c(t_{i-1}')\Vert\,\Vert\Vkt t_c(t_i)\Vert \sqrt{k_e}}
          {a_c b_c}, \quad 
    l_i = \ol{P_iQ_i} = \Frac{\Vert\Vkt t_c(t_i)\Vert\,\Vert\Vkt t_c(t_i')\Vert 
          \sqrt{k_e}}{a_c b_c}.
\]
The canonical motion of the billiard induces for the side $P_iP_{i+1}$ the instant angular velocity 
\[  \omega_i = \frac{a_c b_c}{\Vert\Vkt t_c(t_i')\Vert}
     = \,\frac{a_c b_c}{\sqrt{-k_h(t_i')}}\,. 
\]
\end{lem}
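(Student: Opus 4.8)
The plan is to deduce all three formulas from the velocity analysis of Section~3 together with \Thmref{thm:vel}, rather than from a direct coordinate computation of the tangent lengths (the two tangent segments from $P_i$ to $c$ have different lengths, so one would have to intersect the tangent at $Q_i$ with $e$ and solve a quadratic, which is messier). The crucial extra input is that the contact point $Q_i$ itself moves, along the caustic $c$. Since $c$ belongs to the confocal family and each $\gamma(u)\in\Gamma$ fixes $c$ while permuting its tangents, the orbit $u\mapsto Q_i$ is exactly the instant motion of \Thmref{thm:vel} restricted to $c$. Applying \eqref{eq:inf_motion1} with $(a_e,b_e)=(a_c,b_c)$ then gives the velocity of $Q_i$ as $\Vert\Vkt t_c(t_i')\Vert\,\Vkt t_c(t_i')$, so $Q_i$ traverses $c$ with speed $\Vert\Vkt t_c(t_i')\Vert^2=-k_h(t_i')$.

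From here $\omega_i$ is immediate. Throughout the motion the side $P_iP_{i+1}$ stays tangent to $c$ at the moving point $Q_i$, so its direction always coincides with the tangent direction of $c$ at $Q_i$; hence its angular velocity $\omega_i$ equals the rate of change of the tangent angle of $c$ at $Q_i$, which by the Frenet relation is the curvature of $c$ at $Q_i$ times the speed of $Q_i$ along $c$. The curvature of $c$ at $Q_i=(a_c\cos t_i',\,b_c\sin t_i')$ equals $a_cb_c/\Vert\Vkt t_c(t_i')\Vert^3$, and multiplying by the speed $\Vert\Vkt t_c(t_i')\Vert^2$ found above yields $\omega_i=a_cb_c/\Vert\Vkt t_c(t_i')\Vert=a_cb_c/\sqrt{-k_h(t_i')}$.

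The two distance formulas then follow from the velocity decomposition already used in Section~3. Splitting $\Vkt v_i$ into components relative to the pivots $Q_{i-1}$ and $Q_i$ (as in \eqref{eq:vn}) shows that the common normal component of $\Vkt v_i$ satisfies $v_{n|i}=r_i\,\omega_{i-1}=l_i\,\omega_i$, while \eqref{eq:vt, v global} identifies $v_{n|i}=\Vert\Vkt t_c(t_i)\Vert\sqrt{k_e}$. Substituting the expression for $\omega$ just obtained,
\[
  l_i=\frac{v_{n|i}}{\omega_i}=\frac{\Vert\Vkt t_c(t_i)\Vert\,\Vert\Vkt t_c(t_i')\Vert\sqrt{k_e}}{a_cb_c},\qquad
  r_i=\frac{v_{n|i}}{\omega_{i-1}}=\frac{\Vert\Vkt t_c(t_{i-1}')\Vert\,\Vert\Vkt t_c(t_i)\Vert\sqrt{k_e}}{a_cb_c},
\]
which are the claimed identities (and, reassuringly, $l_i/r_i=\omega_{i-1}/\omega_i$ reproduces \eqref{eq:vn}); all quantities are positive for a counter-clockwise billiard, so no sign discussion is needed.

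The step needing the most care is the first one: justifying that the billiard motion carries $Q_i$ exactly by $\gamma(u)$, i.e.\ that the motion induced on $c$ is again the restriction of $\Gamma$. This rests on \Thmref{thm:vel}: $\gamma(u)$ fixes $c$ and sends the side $P_iP_{i+1}$ (a tangent of $c$) to the corresponding later side (again a tangent of $c$), hence it must send the tangency point $Q_i=[P_iP_{i+1}]\cap c$ to the later tangency point. A self-contained alternative would verify $l_i^2=\ol{P_iQ_i}^2$ directly from \eqref{eq:P in tQ} and \eqref{eq:ell_in_cart}, but eliminating the cross term $a_ea_c\cos t_i\cos t_i'+b_eb_c\sin t_i\sin t_i'$ against the tangency condition is considerably more laborious, so I would keep the kinematic route as the main line.
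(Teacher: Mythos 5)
Your proposal is correct and follows essentially the same route as the paper: both determine the speed of the contact point $Q_i$ along $c$ from the instant motion ($v_c=\Vert\Vkt t_c(t_i')\Vert^2$), divide by the radius of curvature $\rho_c=\Vert\Vkt t_c\Vert^3/(a_cb_c)$ of $c$ to get $\omega_i$, and then obtain $r_i,l_i$ from $v_{n|i}=r_i\,\omega_{i-1}=l_i\,\omega_i$ with $v_{n|i}=\Vert\Vkt t_c(t_i)\Vert\sqrt{k_e}$ from \eqref{eq:vt, v global}. The only (harmless) difference is that you justify the motion of $Q_i$ via the group property of $\Gamma$ from \Thmref{thm:vel}, whereas the paper reads it off directly from $\Vkt v_c=\dot t\,\Vkt t_c$ and \eqref{eq:dot t}, and phrases the curvature step as the rolling relation $v_c=\omega_i\rho_c$ rather than the Frenet relation.
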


\begin{figure}[hbt] 
  \centering 
  \psfrag{P}[cb]{\contourlength{1.2pt}\contour{white}{\sz\red $P_2$}}
  \psfrag{P'}[lb]{\contourlength{1.2pt}\contour{white}{\sz\blue $Q_1'$}}
  \psfrag{T1}[lb]{\sz\blue $Q_1$}
  \psfrag{T2}[rb]{\contourlength{1.4pt}\contour{white}{\sz\blue $Q_2$}}
  \psfrag{P1'}[lb]{\contourlength{1.2pt}\contour{white}{\sz\red $P_1'$}}
  \psfrag{P2'}[rb]{\sz\red $P_2'$}
  \psfrag{T1*}[rc]{\sz\blue $Q_1^*$}
  \psfrag{T2*}[lt]{\contourlength{1.4pt}\contour{white}{\sz\blue $Q_2^*$}}
  \psfrag{la}[cb]{\sz\blue $r_2$}
  \psfrag{e}[rt]{\red $\boldsymbol{e}$}
  \psfrag{e0}[rt]{\blue $\boldsymbol{c}$}
  \psfrag{vn1}[rt]{\sz $\Vkt v_{n_1}$}
  \psfrag{vt1}[rb]{\sz $\Vkt v_{t_1}$}
  \psfrag{vn2}[rb]{\sz $\Vkt v_{n_2}$}
  \psfrag{vt2}[rt]{\sz $\Vkt v_{t_2}$}
  \psfrag{v}[rb]{\sz $\Vkt v_2$}
  \psfrag{v'}[rc]{\contourlength{1.4pt}\contour{white}{\sz $\Vkt v_1'$}}
  \psfrag{om1}[ct]{\sz\red $\omega_1$}
  \psfrag{om2}[lt]{\sz\red $\omega_2$}
  \psfrag{r2}[lb]{\sz\red $r_2$}
  \psfrag{l2}[lt]{\sz\red $l_2$}
  \psfrag{rho1}[lt]{\contournumber{32}\contourlength{1.4pt}\contour{white}{\sz\blue $\rho_c(t_1')$}} 
  \includegraphics[width=80mm]{\pfad 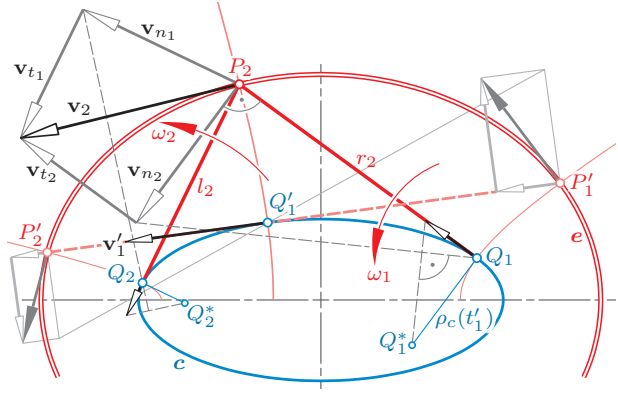} 
  \caption{Velocities $\Vkt v_2$ of the vertex $P_2$, $\Vkt v_1'$ of the affine image $Q_1'$ with $\Vert\Vkt v_1'\Vert = \Vert\Vkt v_{n_1}\Vert = \Vert\Vkt v_{n_2}\Vert$, of the contact points $Q_i$ (with $Q_i^*$ as respective centers of curvature), and of vertices $P_i'$ of the conjugate billiard, $i=1,2\,$.}
  \label{fig:Graves2}
\end{figure}

\begin{proof}
Referring to \Figref{fig:Graves2}, if the tangent $[Q_1,P_2]$ rolls on $c$, then the vertex $P_2$ receives the velocity vector $\Vkt v_{n_2}$ satisfying \eqref{eq:vt, v global}, while the point of contact $Q_1$ moves with the velocity $v_c(t_1')$ along $c\,$. 
We can express this velocity in terms of the radius of curvature $\rho_c(t_1')$ of $c$ as 
\[ v_c(t_1') = \omega_1\,\rho_c(t_1'),
\]
where $\rho_c(t) =\Vert\Vkt t_e(t)\Vert^3/a_c b_c$ by \cite[p.~79]{Conics}.
On the other hand, from $\Vkt v_c = \dot t\,\Vkt t_c$ and \eqref{eq:dot t} follows $v_c(t_1') = \Vert\Vkt t_c(t_1')\Vert^2$.  
This yields
\[  \omega_1 = \frac{v_c(t_1')}{\rho_c(t_1')} 
    = \frac{\Vert\Vkt t_c(t_1')\Vert^2\,a_c b_c}{\Vert\Vkt t_c(t_1')\Vert^3}
    = \frac{a_c b_c}{\Vert\Vkt t_c(t_1')\Vert}
    = \frac{a_c b_c}{\sqrt{-k_h(t_1')}}
\]
by \eqref{eq:k_h}.
Thus, we obtain for the velocity $v_{n|2}$ of $P_2$ by \eqref{eq:vt, v global}
\[  r_2\,\omega_1 = v_{n|2} = \Vert\Vkt t_c(t_2)\Vert \sqrt{k_e}, 
    \zwi\mbox{hence}\zwi
    r_2 = \frac{\Vert\Vkt t_c(t_2)\Vert \sqrt{k_e}}{a_c b_c}\,\Vert\Vkt t_c(t_1')\Vert.
\]
Similarly follows from $l_2\,\omega_2 = v_{n|2}$ the stated expression for the distance $l_2\,$.
Note that $t_1, t_1', t_2, t_2', t_3,\dots$ is the sequence of consecutive parameters of the points $P_1,Q_1,P_2,Q_2,P_3,\dots$
The formulas for $r_i$ and $l_i$ reveal again that the same distances appear at the conjugate billiard.
\end{proof}

\medskip
The angular velocity of the tangent to $e$ at $P_2$ equals the arithmetic mean $(\omega_1+\omega_2)/2$ (\Figref{fig:Graves2}).
On the other hand, it is defined by the radius of curvature $\rho_e$ of $e$ at $P_2$ and the velocity $v_2$ by \eqref{eq:vt, v global}, since
\[  v_2 = \rho_e(t_2) \frac{\omega_1+\omega_2}2\,. 
\]
This means by \Lemref{lem:liri}
\[  \Vert\Vkt t_c(t_2)\Vert\,\Vert\Vkt t_e(t_2)\Vert
    = \Frac{\Vert\Vkt t_e(t_2)\Vert^3}{a_e b_e}\,\Frac{a_c b_c}2
    \left(\Frac 1{\Vert\Vkt t_c(t_1')\Vert} + \Frac 1{\Vert\Vkt t_c(t_2')\Vert}\right)
\]
and results by \eqref{eq:Winkel/2} in
\begin{equation}\label{eq:t1't2t2'}
  \frac 1{\Vert\Vkt t_c(t_1')\Vert} + \frac 1{\Vert\Vkt t_c(t_2')\Vert}
  = \frac{2 a_e b_e}{a_c b_c}\,\frac{\Vert\Vkt t_c(t_2)\Vert}
    {\Vert\Vkt t_e(t_2)\Vert^2}
  = \frac{a_e b_e}{a_c b_c\sqrt{k_e}}\,\sin\theta_2\,.  
\end{equation}

\begin{thm}\label{thm:altern_sum} 
 The exterior angles $\theta_i$ of an $N$-periodic billiard in an ellipse and with an ellipse as caustic satisfy for $N\equiv 0\pmod 2$ 
\[  \sum_{i=1}^N (-1)^i \sin\theta_i = 0
    \zwi\mbox{and for}\zwi N\equiv 0\mskip -15mu\pmod 4 \zwi
    \sum_{i=1}^{N/2} (-1)^i \sin\theta_i = 0\,.
\]
\end{thm}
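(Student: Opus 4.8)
The plan is to reduce both claimed identities to the single relation~\eqref{eq:t1't2t2'}. That relation was derived at the vertex $P_2$, but the same computation applies at every vertex $P_i$: the tangent to $e$ at $P_i$ turns with angular velocity $(\omega_{i-1}+\omega_i)/2$ and equally with $v_i/\rho_e(t_i)$, so substituting $\omega_{i-1},\omega_i$ from \Lemref{lem:liri}, $v_i$ from \eqref{eq:vt, v global}, $\rho_e(t_i)=\Vert\Vkt t_e(t_i)\Vert^3/(a_eb_e)$, and then \eqref{eq:Winkel/2}, gives
\[
  \frac1{\Vert\Vkt t_c(t_{i-1}')\Vert}+\frac1{\Vert\Vkt t_c(t_i')\Vert}
   =\frac{a_eb_e}{a_cb_c\sqrt{k_e}}\,\sin\theta_i .
\]
Abbreviate $\beta_i:=1/\Vert\Vkt t_c(t_i')\Vert>0$, a quantity attached to the contact point $Q_i$, and read all indices modulo $N$, so that $Q_0=Q_N$ and hence $\beta_0=\beta_N$. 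Then $\sin\theta_i=\mu\,(\beta_{i-1}+\beta_i)$ with the fixed positive constant $\mu:=\frac{a_cb_c\sqrt{k_e}}{a_eb_e}$ (positive since $e$ lies outside $c$, so $k_e>0$). In other words each $\sin\theta_i$ is, up to the common factor $\mu$, a ``discrete difference'' of the sequence $(\beta_i)$, and alternating sums of the $\sin\theta_i$ will telescope.

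\emph{First identity.} Let $N\equiv0\pmod2$. Then
\[
  \sum_{i=1}^{N}(-1)^i\sin\theta_i
   =\mu\sum_{i=1}^{N}(-1)^i(\beta_{i-1}+\beta_i)
   =\mu\bigl(-\beta_0+(-1)^N\beta_N\bigr)=\mu(\beta_N-\beta_0)=0,
\]
because every $\beta_j$ with $1\le j\le N-1$ occurs once with sign $(-1)^j$ and once with sign $(-1)^{j+1}$, while $(-1)^N=1$ and $\beta_0=\beta_N$ by periodicity. Only the $N$-periodicity of the billiard is used here.

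\emph{Second identity.} Let $N\equiv0\pmod4$, so that $N/2$ is even. The same telescoping, now over $i=1,\dots,N/2$, yields
\[
  \sum_{i=1}^{N/2}(-1)^i\sin\theta_i
   =\mu\bigl(-\beta_0+(-1)^{N/2}\beta_{N/2}\bigr)=\mu(\beta_{N/2}-\beta_0),
\]
and it remains to prove $\beta_{N/2}=\beta_0$. For this I would invoke \Thmref{eq:action}: the billiard map $\gamma(2\varDelta u)$ sends $Q_i$ to $Q_{i+1}$ and shifts the canonical parameter by $2\varDelta\wt u$, with $\varDelta\wt u=2\tau K/N$; hence $\wt u_{Q_{N/2}}-\wt u_{Q_0}=N\varDelta\wt u=2\tau K$, an integer multiple of $2K$. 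Since $\Vert\Vkt t_c(t)\Vert^2=-k_h=a_c^2\,\mathrm{dn}^2\wt u$ is a function of the canonical parameter with period $2K$ (cf.~\eqref{eq:k_h} and \eqref{eq:ke_kh}) --- equivalently, canonical parameters differing by $2K$ determine the same confocal hyperbola, so $Q_0$ and $Q_{N/2}$ lie on one confocal hyperbola --- we obtain $\beta_{N/2}=\beta_0$, and the half-sum vanishes. The same remark shows more generally that $\beta_j=\beta_{j+N/2}$ for every $j$ whenever $N$ is even, so that $\sin\theta_{i+N/2}=\sin\theta_i$; for $N\equiv0\pmod4$ one has $(-1)^{i+N/2}=(-1)^i$, the two halves of the full alternating sum therefore coincide, and each equals half of $0$.

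\emph{Main obstacle.} The vertex-independent form of \eqref{eq:t1't2t2'} and the sign bookkeeping in the telescoping are routine. The genuine content --- the heart of the second identity --- is the $2K$-periodicity (not merely $4K$-periodicity) of $-k_h$ as a function of the canonical parameter, i.e.\ the fact that contact points a half-period apart share a confocal hyperbola. This is exactly why the case $N\equiv2\pmod4$ is excluded: there $N/2$ is odd, the telescoped half-sum equals $\mu(-\beta_0-\beta_{N/2})=-2\mu\beta_0\neq0$, and no such identity can hold.
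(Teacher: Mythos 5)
Your proof is correct, and while it shares with the paper the key identity \eqref{eq:t1't2t2'} (legitimately promoted from $i=2$ to general $i$) and the idea of telescoping alternating sums of $\beta_i=1/\Vert\Vkt t_c(t_i')\Vert$, it closes the argument differently at both ends. The paper proves the full alternating sum vanishes for $N\equiv 2\pmod 4$ by invoking the central symmetry of even-periodic billiards ($\theta_i=\theta_{i+N/2}$, citing \cite[Cor.~4.2]{Sta_I}) together with the sign flip $(-1)^i=-(-1)^{i+N/2}$, and only telescopes in the case $N\equiv 0\pmod 4$; you instead telescope the full sum over one whole period for every even $N$, where the boundary terms cancel by $N$-periodicity alone ($\beta_0=\beta_N$) --- this is more elementary, avoids the case split, and does not need the external symmetry result for the first identity. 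For the half-sum, both arguments telescope down to the same boundary expression, $\mu(\beta_{N/2}-\beta_0)$ in your notation; the paper kills it via $\Vkt t_c(t_{N/2}')=-\Vkt t_c(t_N')$ (central symmetry again, using that $\tau$ is odd), whereas you derive $\beta_{N/2}=\beta_0$ from the canonical parametrization, namely $\wt u_{Q_{N/2}}-\wt u_{Q_0}=N\varDelta\wt u=2\tau K$ combined with the $2K$-periodicity of $-k_h=a_c^2\,\mathrm{dn}^2\wt u$. These two closing facts are equivalent (the shift by $2\tau K$ is precisely the point reflection in the center), so nothing essential changes; what your route buys is a uniform, symmetry-free proof of the first identity and an explicit identification of the $2K$-periodicity of $k_h$ as the real content of the second, together with a clean explanation of why $N\equiv 2\pmod 4$ must be excluded there.
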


\begin{proof}
By virtue of \cite[Corollary~4.2]{Sta_I}, periodic billiards with even $N = 2n$ are centrally symmetric, which implies $\theta_i = \theta_{i+n}\,$.
For $N\equiv 2\pmod 4$ the sum from $1$ to $N$ must vanish since $(-1)^i = -(-1)^{(i+n)}$.
\\
In the remaining case $N\equiv 0\pmod 4$ follows from \eqref{eq:t1't2t2'}  
\[ \sin\theta_i = \frac{a_c b_c\sqrt{k_e}}{a_e b_e}\left(
  \frac 1{\Vert\Vkt t_c(t_{i-1}')\Vert} + \frac 1{\Vert\Vkt t_c(t_i')\Vert}\right)
\]
and further
\[ \begin{array}{l}
   \displaystyle\sum_{i=1}^{N/2} \,\sin\theta_i = \Frac{a_c b_c \sqrt{k_e}}{a_e b_e}
   \,\cdot \\
   \phantom{m}\left(\Frac 1{\Vert\Vkt t_c(t_N')\Vert} 
   + \Frac 1{\Vert\Vkt t_c(t_1')\Vert} 
   - \Frac 1{\Vert\Vkt t_c(t_1')\Vert} 
   - \Frac 1{\Vert\Vkt t_c(t_2')\Vert} 
   + - \dots - \Frac 1{\Vert\Vkt t_c(t_n')\Vert}\right).
  \end{array} 
\]
This sum vanishes, since $\Vkt t_c(t_n') = -\Vkt t_c(t_N')\,$, due to the odd turning number $\tau$ because of $\mbox{gcd}(N,\tau)=1\,$.
\end{proof}   

\medskip
At the same token, from 
\begin{equation}\label{eq:angle_dot}
  \dot{\theta}_i = \omega_i - \omega_{i-1} 
  = a_c b_c \left( \frac 1{\Vert\Vkt t_c(t_i')\Vert} 
    - \frac 1{\Vert\Vkt t_c(t_{i-1}')\Vert}\right)
\end{equation}
and \eqref{eq:t1't2t2'} follows
\[  \frac{\mathrm d}{\mathrm d u}\cos\theta_i = -\dot\theta_i \sin\theta_i
    = \frac{a_c^2 b_c^2 \sqrt{k_e}}{a_e b_e}\left(
    \frac 1{\Vert\Vkt t_c(t_{i-1}')\Vert^2} 
    - \frac 1{\Vert\Vkt t_c(t_i')\Vert^2}\right).
\]
This shows that $\frac{\mathrm d}{\mathrm d u}\left(\sum_1^N \cos\theta_i\right)$ vanishes and, therefore, $\sum_1^N \cos\theta_i$ is invariant against billiard motions, which was first proved in \cite{Ako-Tab}.

For the sake of completeness, we also focus on the variation of the side lengths under the canonical billiard motion.
We obtain
\begin{equation}\label{eq:side}
 \begin{array}{rcl} 
  \Frac{\mathrm d}{\mathrm d u}\,\ol{P_iP_{i+1}} &= &v_{t|i+1} - v_{t|i}
  \\[1.5mm]
   &= &\Vert\Vkt t_c(t_{i+1})\Vert^2 - \Vert\Vkt t_c(t_i)\Vert^2
       = d^2(\sin^2 t_{i+1} - \sin^2 t_i).
  \end{array}                         
\end{equation}
The vanishing sum over all $i$ confirms again the constant perimeter.
We like to recall that already in \cite{Bialy-Tab} some proofs for invariants were based on differentiation.

\medskip
Finally we concentrate on the effects showing up when the vertex $P_i$ traverses a quarter of the full period along the billiard.
 
\begin{lem}\label{lem:Viertel}  
As before, let $t_1,t_1',t_2,t_2',\dots, t_N'$ be the sequence of parameters of an $N$-periodic billiard in an ellipse $e$ with an ellipse $c$ as caustic.
Then holds for $N\equiv 0\pmod 2$: 
\[ \left. \begin{array}{rl}
    \mbox{if}\zwi N = 4n:\zwi 
     &\Vert\Vkt t_c(t_i)\Vert\,\Vert\Vkt t_c(t_{i+n})\Vert 
      = \sqrt{k_h(t_i)\, k_h(t_{i+n})} 
    \\[1.0mm]
    \mbox{if}\zwi N = 4n\!+\!2:\zwi 
     &\Vert\Vkt t_c(t_i)\Vert\,\Vert\Vkt t_c(t_{i+n}')\Vert 
      = \sqrt{k_h(t_i)\, k_h(t_{i+n}')} 
  \end{array}\right\} = a_c\,b_c\,,
\]
and the same after the parameter shift $t_i\mapsto t_i'$ and $t_i'\mapsto t_{i+1}\,$.
\end{lem}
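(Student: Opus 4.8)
The plan is to push everything through the canonical coordinate $\wt u$ of \Thmref{eq:action} and reduce all four claimed identities to one classical quarter-period formula for $\mathrm{dn}$. First I would use the second formula in \eqref{eq:ke_kh}, namely $k_h(\wt u)=-a_c^2\,\mathrm{dn}^2\wt u$, together with \eqref{eq:k_h}, to record that the point of a confocal ellipse with canonical coordinate $\wt u$ satisfies $\Vert\Vkt t_c\Vert=\sqrt{-k_h}=a_c\DN\wt u$. This already makes the middle member of each displayed line of the Lemma automatic, since $k_h(\cdot)\le 0$ gives $\sqrt{k_h(\cdot)\,k_h(\cdot)}=\Vert\Vkt t_c(\cdot)\Vert\,\Vert\Vkt t_c(\cdot)\Vert$; it remains only to prove that this product equals $a_c b_c$.

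Next I would do the coordinate bookkeeping. By \Thmref{eq:action}, if $P_1$ has canonical coordinate $\wt u_0$, then the whole sequence $P_1,Q_1,P_2,Q_2,\dots$ carries the arithmetic progression $\wt u_0,\wt u_0+\varDelta\wt u,\wt u_0+2\varDelta\wt u,\dots$, and for an $N$-periodic billiard with turning number $\tau$ one has $\varDelta\wt u=2\tau K/N$. I would then compute the coordinate gap between the two points occurring in each case: for $N=4n$ the vertex $P_{i+n}$ lies $2n\,\varDelta\wt u=\tau K$ past $P_i$, while for $N=4n+2$ the contact point $Q_{i+n}$ lies $(2n+1)\,\varDelta\wt u=\tau K$ past $P_i$. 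In both cases the gap equals $\tau K$, and since $N$ is even with $\gcd(N,\tau)=1$ the integer $\tau$ is odd, so $\tau K=K+2jK$ with $j\in\ZZ$ and hence $\DN(\wt u+\tau K)=\DN(\wt u+K)$ by the $2K$-periodicity of $\mathrm{dn}$.

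Finally I would invoke the quarter-period shift $\mathrm{dn}(\wt u+K)=m'/\mathrm{dn}\,\wt u$ with complementary modulus $m'=\sqrt{1-m^2}=b_c/a_c$ (using $m=d/a_c$, $d^2=a_c^2-b_c^2$), which gives $\DN(\wt u)\,\DN(\wt u+K)=b_c/a_c$. Multiplying by $a_c^2$ and combining with the first step yields $\Vert\Vkt t_c(t_i)\Vert\,\Vert\Vkt t_c(t_{i+n})\Vert=a_c^2\DN(\wt u_0)\,\DN(\wt u_0+\tau K)=a_c b_c$ when $N=4n$, and the same computation with $t_{i+n}'$ in place of $t_{i+n}$ when $N=4n+2$. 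The assertion ``the same after the parameter shift $t_i\mapsto t_i'$ and $t_i'\mapsto t_{i+1}$'' needs no new idea: relabelling the sequence $P_i,Q_i,P_{i+1},\dots$ by one step leaves the coordinate gap between the two relevant points equal to $\tau K$, so the argument applies verbatim.

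I expect no deep obstacle here; the single point that must be handled with care is making sure the coordinate difference in the case $N\equiv 2\pmod 4$ is taken from $P_i$ to $Q_{i+n}$ — an \emph{odd} number $2n+1$ of steps of length $\varDelta\wt u$ — rather than from $Q_i$ to $Q_{i+n}$, so that the gap comes out as $\tau K$ and not $\tfrac{2n}{2n+1}\tau K$. Once the step count is pinned down, everything rests on the one identity $\mathrm{dn}(\wt u)\,\mathrm{dn}(\wt u+K)=b_c/a_c$ and on the elementary fact that $\tau$ is odd.
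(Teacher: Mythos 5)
Your proof is correct and follows essentially the same route as the paper: reduce everything to $\Vert\Vkt t_c\Vert = a_c\,\mathrm{dn}\,\wt u$ via \eqref{eq:ke_kh} and apply the quarter-period identity $\mathrm{dn}(\wt u)\,\mathrm{dn}(\wt u+K)=\sqrt{1-m^2}=b_c/a_c$. Your explicit step count showing that the relevant coordinate gap is $\tau K$ with $\tau$ odd (hence congruent to $K$ modulo the period $2K$ of $\mathrm{dn}$) is a welcome bit of extra care that the paper glosses over by simply asserting that a quarter of the full period corresponds to a shift by $K$.
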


\begin{proof}
Based on the canonical parametrization by $\wt u$, a quarter of the period $4K$ corresponds to a shift by $K$.
In the case $N = 4n$ this shift effects $t_i\mapsto t_{i+n}$ and $t_i'\mapsto t_{i+n}'$.
If $N = 4n+2$, then $t_i\mapsto t_{i+n}'$ and $t_i'\mapsto t_{i+n+1}$. 
\\
According to \eqref{eq:ke_kh} holds $k_h = -a_c^2\,\mathrm{dn}^2\wt u$ and by \eqref{eq:k_h} $\Vert \Vkt t_c(t)\Vert = \sqrt{-k_h(t)} = a_c\DN\wt u\,$.
The identity 
\[  \Dn(\wt u + K) = \frac{\sqrt{1-m^2}}{\Dn(\wt u)}
\]
implies
\[  \Dn(\wt u)\cdot\Dn(\wt u + K) = \frac{b_c}{a_c}\,,\zwi\mbox{hence}\zwi
   \sqrt{k_h(\wt u) \cdot k_h(\wt u + K)} = a_c\,b_c\,.
\]
This confirms the claim.
\end{proof}

\begin{thm}\label{thm:Viertel}  
If an $N$-periodic billiard in an ellipse $e$ with an ellipse as caustic is given with $N\equiv 0\pmod 2$, then the distances $r_i = \ol{Q_{i-1}P_i}$ and $l_i = \ol{P_iQ_i}$ satisfy
\[ \left. \begin{array}{rl}
    \mbox{for}\zwi N = 4n:\zwi        &r_i\cdot r_{i+n} = l_i \cdot l_{i+n} 
    \\[1.0mm]
    \mbox{for}\zwi N = 4n\!+\!2:\zwi  &r_i\cdot l_{i+n} = l_i \cdot r_{i+n+1}
  \end{array} \right\} = k_e\,.
\]
\end{thm}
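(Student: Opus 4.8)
The plan is to combine the distance formulas of \Lemref{lem:liri} with the quarter-period identities of \Lemref{lem:Viertel}. By \Lemref{lem:liri} each of $r_i$ and $l_i$ equals $\sqrt{k_e}/(a_cb_c)$ times a product of two norms $\Vert\Vkt t_c(\,\cdot\,)\Vert$ evaluated at two neighbouring entries of the period sequence $t_1,t_1',t_2,t_2',\dots$, while \Lemref{lem:Viertel} asserts that whenever two such arguments are a quarter of that sequence apart, the product of the corresponding norms is $a_cb_c$. Hence in every product of two distances occurring in the claim the four norm-factors regroup into two quarter-period pairs, each equal to $a_cb_c$; the resulting $(a_cb_c)^2$ cancels the denominator $(a_cb_c)^2$, the two factors $\sqrt{k_e}$ multiply to $k_e$, and the claim follows.

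Explicitly, by \Lemref{lem:liri} one has, for $N=4n$,
\[ r_i\,r_{i+n} = \frac{k_e}{(a_cb_c)^2}\Bigl(\Vert\Vkt t_c(t_{i-1}')\Vert\,\Vert\Vkt t_c(t_{i+n-1}')\Vert\Bigr)\Bigl(\Vert\Vkt t_c(t_i)\Vert\,\Vert\Vkt t_c(t_{i+n})\Vert\Bigr), \]
\[ l_i\,l_{i+n} = \frac{k_e}{(a_cb_c)^2}\Bigl(\Vert\Vkt t_c(t_i)\Vert\,\Vert\Vkt t_c(t_{i+n})\Vert\Bigr)\Bigl(\Vert\Vkt t_c(t_i')\Vert\,\Vert\Vkt t_c(t_{i+n}')\Vert\Bigr), \]
whereas for $N=4n+2$, where a quarter of the sequence interchanges the two index families,
\[ r_i\,l_{i+n} = \frac{k_e}{(a_cb_c)^2}\Bigl(\Vert\Vkt t_c(t_i)\Vert\,\Vert\Vkt t_c(t_{i+n}')\Vert\Bigr)\Bigl(\Vert\Vkt t_c(t_{i-1}')\Vert\,\Vert\Vkt t_c(t_{i+n})\Vert\Bigr), \]
\[ l_i\,r_{i+n+1} = \frac{k_e}{(a_cb_c)^2}\Bigl(\Vert\Vkt t_c(t_i)\Vert\,\Vert\Vkt t_c(t_{i+n}')\Vert\Bigr)\Bigl(\Vert\Vkt t_c(t_i')\Vert\,\Vert\Vkt t_c(t_{i+n+1})\Vert\Bigr). \]
In each line both brackets equal $a_cb_c$: one bracket is precisely the identity of \Lemref{lem:Viertel} for the relevant parity of $N$, the other is its version "after the parameter shift $t_i\mapsto t_i'$, $t_i'\mapsto t_{i+1}$'' (used once as stated and once with $i$ replaced by $i-1$). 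Therefore all four products equal $k_e$, which is the assertion.

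The only delicate point is the index bookkeeping that justifies invoking \Lemref{lem:Viertel} for each bracketed pair: one must confirm that, e.g., $Q_{i-1}$ and $Q_{i+n-1}$ (respectively $Q_{i-1}$ and $P_{i+n}$ when $N=4n+2$) really are a quarter of the period apart in the sequence $t_1,t_1',t_2,\dots$. This is immediate from the construction in the proof of \Lemref{lem:Viertel}: one step of that sequence is a shift of the canonical parameter by $\varDelta\wt u$, the full run of $2N$ steps closes up to $4\tau K$, and a quarter of it, $(N/2)\,\varDelta\wt u = \tau K$, acts on $\Vert\Vkt t_c\Vert = a_c\DN\wt u$ exactly like the shift by $K$ exploited there, since $\tau$ is odd ($\gcd(N,\tau)=1$ with $N$ even). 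Beyond this matching of indices no genuine obstacle remains.
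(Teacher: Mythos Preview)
Your proof is correct and follows essentially the same route as the paper: plug the formulas of \Lemref{lem:liri} into the four products, regroup the norm factors into quarter-period pairs, and apply \Lemref{lem:Viertel} to each pair to obtain $(a_cb_c)^2$ in the numerator, leaving $k_e$. Your explicit discussion of why the shift by $\tau K$ (rather than $K$) still yields the needed identity for $\DN$, via the $2K$-periodicity of $\DN$ and the oddness of $\tau$, is in fact more careful than the paper's own treatment of this point.
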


\begin{proof}
From the expressions for $r_i$ and $l_i$ in \Lemref{lem:liri} follows, by virtue of \Lemref{lem:Viertel}, for $N = 4n$
\[  r_i\cdot r_{i+n} = \frac{k_e}{a_c^2 b_c^2}\,\Vert\Vkt t_c(t_{i-1}')\Vert\,
    \Vert\Vkt t_c(t_i)\Vert\,\Vert\Vkt t_c(t_{i+n-1}')\Vert\,
    \Vert\Vkt t_c(t_{i+n})\Vert = k_e
\]
and the same result for $l_i\cdot l_{i+n}\,$.
In the case $N = 4n + 2$ we obtain similarly
\[  r_i\cdot l_{i+n} = l_i\cdot r_{i+n+1}  = k_e\,,
\]
as stated.
\end{proof}

The following corollary is an immediate consequence of \Thmref{thm:Viertel}.

\begin{cor}\label{cor:verh_si}
Let $s_i = \ol{P_iP_{i+1}} = l_i + r_{i+1}$ for $i=1,\dots,N$ be the side lengths of the $N$-periodic billiard with even $N$ and $s_i' = \ol{P_i'P_{i+1}'} = r_{i+1} + l_{i+1}$ that of the conjugate billiard.
Then, 
\[ \begin{array}{rl} 
    \mbox{for}\zwi N = 4n: \quad &\Frac{s_{i+n}}{s_i} 
     = \Frac{l_{i+n}}{r_{i+1}} = \Frac{r_{i+n+1}}{l_i}\,, 
    \\[2.5mm]
    \mbox{for}\zwi N = 4n\!+\!2: \quad &\Frac{s_{i+n}}{s_{i-1}'} 
     = \Frac{l_{i+n}}{l_i} = \Frac{r_{i+n+1}}{r_i}\,.
  \end{array}
\]
\end{cor}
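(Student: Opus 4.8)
The plan is to obtain \Corref{cor:verh_si} as a short algebraic consequence of \Thmref{thm:Viertel}, feeding the quarter-period relations proved there into the side decompositions $s_i = l_i + r_{i+1}$ and $s_i' = r_{i+1} + l_{i+1}$; no further geometry is required.

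First I would treat the case $N = 4n$. By \Thmref{thm:Viertel} one has $r_i r_{i+n} = l_i l_{i+n} = k_e$ for every index, so in particular $r_{i+1} r_{i+n+1} = k_e$, whence $l_{i+n} = k_e/l_i$ and $r_{i+n+1} = k_e/r_{i+1}$. Substituting into $s_{i+n} = l_{i+n} + r_{i+n+1}$ gives
\[
  s_{i+n} = k_e\Bigl(\frac1{l_i} + \frac1{r_{i+1}}\Bigr)
          = \frac{k_e\,(l_i + r_{i+1})}{l_i r_{i+1}}
          = \frac{k_e}{l_i r_{i+1}}\,s_i ,
\]
so $s_{i+n}/s_i = k_e/(l_i r_{i+1})$; and the very same two substitutions show that this common value equals both $l_{i+n}/r_{i+1}$ and $r_{i+n+1}/l_i$, which is the first assertion.

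Next I would do $N = 4n+2$ in exactly the same way. Here \Thmref{thm:Viertel} reads $r_i l_{i+n} = l_i r_{i+n+1} = k_e$, so $l_{i+n} = k_e/r_i$ and $r_{i+n+1} = k_e/l_i$; and reading off $s_{i-1}' = r_i + l_i$ (the case $j = i-1$ of $s_j' = r_{j+1} + l_{j+1}$), I get
\[
  s_{i+n} = l_{i+n} + r_{i+n+1} = k_e\Bigl(\frac1{r_i} + \frac1{l_i}\Bigr)
          = \frac{k_e}{r_i l_i}\,s_{i-1}' ,
\]
and again $k_e/(r_i l_i) = l_{i+n}/l_i = r_{i+n+1}/r_i$, as asserted.

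The computation is routine and I do not expect a genuine obstacle; the only thing that needs care is the bookkeeping of indices — that the relation $r_i r_{i+n} = k_e$ must be reindexed to $r_{i+1} r_{i+n+1} = k_e$ in the $4n$ case, and that in the $4n+2$ case the relevant conjugate side length is $s_{i-1}' = r_i + l_i$, so that the common denominator comes out as $r_i l_i$ rather than a shifted copy. Once the indices are lined up, the identities drop out immediately.
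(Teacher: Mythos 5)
Your proposal is correct and is exactly the computation the paper has in mind: the paper gives no written proof, stating only that the corollary is an immediate consequence of \Thmref{thm:Viertel}, and your substitutions $l_{i+n}=k_e/l_i$, $r_{i+n+1}=k_e/r_{i+1}$ (resp.\ $l_{i+n}=k_e/r_i$, $r_{i+n+1}=k_e/l_i$) into $s_{i+n}=l_{i+n}+r_{i+n+1}$ supply precisely the omitted routine verification, with the index bookkeeping handled correctly in both cases.
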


\bigskip
Finally we prove the invariance of k117 in \cite[Table~2]{80}.

\begin{thm}\label{thm:prod_li} 
Referring to the notation in \Lemref{lem:liri}, for even $N$ the products
\[  r_1 r_2 \dots r_N = l_1 l_2\dots l_N = k_e^{N/2}  
\]
are invariant against billiard motions.
For $N\equiv 0\pmod 4$ this is already true for the products
\[  r_1 r_2 \dots r_{N/2} = l_1 l_2\dots l_{N/2} = k_e^{N/4}.  
\]
\end{thm}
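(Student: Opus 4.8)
The plan is to derive everything from the factorizations of $r_i$ and $l_i$ in \Lemref{lem:liri} together with the quarter‑period identities in \Thmref{thm:Viertel}. Since $k_e = a_e^2 - a_c^2$ is fixed throughout a billiard motion (the ellipses $e$ and $c$ do not change), the asserted formulas are automatically invariant once the equalities $r_1\cdots r_N = l_1\cdots l_N = k_e^{N/2}$ (and their halved counterparts) are established; so the entire task reduces to proving these equalities. Throughout, the distances $r_i,l_i$ and the number $k_e$ are positive, so every square root extracted below is the positive one and no sign ambiguity arises.

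First I would record two elementary facts. (i) From \Lemref{lem:liri}, $r_i/l_i = \Vert\Vkt t_c(t_{i-1}')\Vert/\Vert\Vkt t_c(t_i')\Vert$, hence multiplying over one full period and telescoping cyclically gives $r_1\cdots r_N = l_1\cdots l_N$; this is precisely \eqref{eq:r1..rN} (k116). (ii) Because the index set is $\mathbb Z/N\mathbb Z$, every shift $i\mapsto i+n$ is a bijection, so $\prod_{i=1}^N r_{i+n} = \prod_{i=1}^N r_i$ and likewise for the $l_i$. These two observations are all the structure I need.

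Now I would split according to the residue of $N$ modulo $4$. If $N = 4n$, \Thmref{thm:Viertel} gives $r_i r_{i+n} = k_e$ for every $i$; multiplying over $i = 1,\dots,N$ and invoking (ii) yields $(r_1\cdots r_N)^2 = k_e^N$, hence $r_1\cdots r_N = k_e^{N/2}$, and the same computation with $l_i l_{i+n} = k_e$ for $l$. For the sharper statement, note that as $i$ runs over $1,\dots,n$ the pairs $\{i,i+n\}$ partition $\{1,\dots,2n\} = \{1,\dots,N/2\}$; multiplying $r_i r_{i+n} = k_e$ over just these $n$ indices therefore gives $r_1\cdots r_{N/2} = k_e^{n} = k_e^{N/4}$, and likewise for the $l_i$. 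If instead $N = 4n+2$, \Thmref{thm:Viertel} gives $r_i l_{i+n} = k_e$ for every $i$; multiplying over $i = 1,\dots,N$ and using (ii) yields $(r_1\cdots r_N)(l_1\cdots l_N) = k_e^N$, which together with fact (i) gives $(r_1\cdots r_N)^2 = k_e^N$ and hence the claim; no halved version is expected here, consistently with $N/4$ not being an integer.

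The argument is essentially bookkeeping, so there is no single hard step; the only points needing care are the two parity cases and the cyclic index arithmetic. The one thing worth isolating conceptually is precisely why the halved product works only when $4\mid N$: it is the combinatorial fact that the translates $\{i,i+n\}_{i=1}^{n}$ tile $\{1,\dots,N/2\}$ when $N=4n$, whereas for $N=4n+2$ the quarter‑period shift interchanges the $r$'s with the $l$'s and no such tiling of a half‑period is available.
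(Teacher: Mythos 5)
Your proof is correct and follows essentially the same route as the paper: both rest on the quarter-period identities of Theorem~\ref{thm:Viertel} together with $r_1\cdots r_N=l_1\cdots l_N$ from \eqref{eq:r1..rN}, and your handling of the case $N=4n+2$ (multiplying $r_i\,l_{i+n}=k_e$ over a full period and combining with \eqref{eq:r1..rN}) is exactly the paper's computation $R^2(u)=\prod_i r_i\,l_{i+n}=k_e^N$. The only, harmless, deviation is in the case $4\mid N$, where the paper invokes the central symmetry $r_i\leftrightarrow r_{i+N/2}$, $l_i\leftrightarrow l_{i+N/2}$, whereas you obtain both the half- and full-period products directly from the tiling of the index set by the pairs $\{i,i+n\}$ and from squaring --- a slightly more self-contained bookkeeping that reaches the same conclusion.
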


\begin{proof}
For $N\equiv 0\pmod 4$ the statements are a direct consequence of \Thmref{thm:Viertel} and the central symmetry of the billiard which exchanges $r_i$ with $r_{i+N/2}$ and $l_i$ with $l_{i+N/2}\,$.
\\
In the remaining case $N = 2n+2$ we note that by \eqref{eq:r1..rN} $R(u):= r_1 r_2\dots r_N = l_1 l_2\dots l_N$.
Hence, by virtue of \Thmref{thm:Viertel}, 
\[  R^2(u) = \prod_{i=1}^N \ (r_i\,l_{i+n}) = k_e^N,
\]
which yields the stated result.
\end{proof}





%
%

\end{document}